\documentclass[10pt]{amsart}
\usepackage[T1]{fontenc}
\usepackage[latin5]{inputenc}
\usepackage{float}
\usepackage{amssymb}
\usepackage{amsfonts}
\usepackage[centertags]{amsmath}
\usepackage{amsthm}
\usepackage{newlfont}
\pagestyle{plain} \makeatletter
\theoremstyle{plain}
\newtheorem{thm}{Theorem}[section]
\numberwithin{equation}{section}
\numberwithin{figure}{section}  
\theoremstyle{plain}

\theoremstyle{plain}

\theoremstyle{plain}
\newtheorem{cor}[thm]{Corollary} 
\theoremstyle{plain}

\theoremstyle{plain}
\newtheorem{lem}[thm]{Lemma} 
\theoremstyle{plain}

\textwidth = 17truecm \textheight = 25truecm \topmargin = -0.5truecm\voffset=-0.5cm \hoffset=-2.5cm
\baselineskip=13truept
\parindent=0.6truecm
\parskip=0truept
\begin{document}
\title{Integrated and Differentiated Sequence Spaces}
\author{Murat Kiri\c{s}ci}
\address{Department of Mathematical Education, Hasan Ali Y\"{u}cel Education Faculty, Istanbul University, Vefa, 34470, Fatih, Istanbul, Turkey}
\email{mkirisci@hotmail.com, murat.kirisci@istanbul.edu.tr}
\vspace{0.5cm}
\thanks{This work was supported by Scientific Projects Coordination Unit of Istanbul University. Project number 34465.}
\subjclass[2010]{Primary 46A45; Secondary 46A35, 46B15.}
\keywords{Matrix transformations, sequence spaces, $BK$-space, dual spaces,  Schauder basis, $AK$-property}
\vspace{0.5cm}

\begin{abstract}
In this paper, we investigate integrated and differentiated sequence spaces which emerge from the concept of the space $bv$ of sequences of bounded variation.The integrated and differentiated sequence spaces which was initiated by Goes and Goes \cite{Goes}. The main propose of the present paper,
we study of matrix domains and some properties of the integrated and differentiated sequence spaces. In section 3, we compute
the alpha-, beta- and gamma duals of these spaces. Afterward, we characterize the matrix classes of these spaces with well-known sequence spaces.
\end{abstract}

\maketitle

\section{Introduction}
The set of all sequences denotes with $\omega := \mathbb{C}^{\mathbb{N}}:=\{x=(x_{k}): x: \mathbb{N}\rightarrow \mathbb{C}, k\rightarrow x_{k}:=x(k)\}$ where $\mathbb{C}$ denotes the complex field and $\mathbb{N}=\{0,1,2,\ldots\}$. Each linear subspace of $\omega$ (with the induced addition and scalar multiplication) is called a \emph{sequence space}. The following subsets of $\omega$ are obviously sequence spaces:$\ell_{\infty}=\{x=(x_{k})\in \omega: \sup_{k}|x_{k}|<\infty\}$, $c=\{x=(x_{k})\in \omega: \lim_{k}x_{k} ~\textrm{ exists}~\}$, $c_{0}=\{x=(x_{k})\in \omega: \lim_{k}x_{k}=0\}$, $bs=\{x=(x_{k})\in \omega: \sup_{n}|\sum_{k=1}^{n}x_{k}|<\infty\}$, $cs=\{x=(x_{k})\in \omega: (\sum_{k=1}^{n}x_{k})\in c\}$ and $\ell_{p}=\{x=(x_{k})\in \omega: \sum_{k}|x_{k}|^{p}<\infty, \quad 1\leq p < \infty\}$. These sequence spaces are Banach space with the following norms; $\|x\|_{\ell_{\infty}}=\sup_{k}|x_{k}|$, $\|x\|_{bs}=\|x\|_{cs}=\sup_{n}|\sum_{k=1}^{n}x_{k}|$ and $\|x\|_{\ell_{p}}=\left(\sum_{k}|x_{k}|^{p}\right)^{1/p}$ as usual, respectively. And also the concept of integrated and differentiated sequence spaces was employed as $\int X=\left\{x=(x_{k})\in \omega: (kx_{k}) \in X \right\}$ and $d(X)=\left\{x=(x_{k})\in \omega: (k^{-1}x_{k}) \in X \right\}$ in \cite{Goes}.

A sequence, whose $k-th$ term is $x_{k}$, is denoted by $x$ or $(x_{k})$. \emph{A coordinate space} (or \emph{$K-$space}) is a vector space of numerical sequences, where addition and scalar multiplication are defined pointwise. That is, a sequence space $X$ with a linear topology is called a $K$-space provided each of the maps $p_{i}:X\rightarrow \mathbb{C}$ defined by $p_{i}(x)=x_{i}$ is continuous for all $i\in \mathbb{N}$. A $BK-$space is a $K-$space, which is also a Banach space with continuous coordinate functionals $f_{k}(x)=x_{k}$, $(k=1,2,...)$. A $K-$space $K$ is called an \emph{$FK-$space} provided $\lambda$ is a complete linear metric space. An \emph{$FK-$space} whose topology is normable is called a \emph{$BK-$ space}. If a normed sequence space $X$ contains a sequence $(b_{n})$ with the property that for every $x\in X$ there is unique sequence of scalars $(\alpha_{n})$ such that
\begin{eqnarray*}
\lim_{n\rightarrow\infty}\|x-(\alpha_{0}b_{0}+\alpha_{1}b_{1}+...+\alpha_{n}b_{n})\|=0
\end{eqnarray*}
then $(b_{n})$ is called \emph{Schauder basis} (or briefly basis) for $X$. The series $\sum\alpha_{k}b_{k}$ which has the sum $x$ is then called the expansion of $x$ with respect to $(b_{n})$, and written as $x=\sum\alpha_{k}b_{k}$. An \emph{$FK-$space} $X$ is said to have $AK$ property, if $\phi \subset X$ and $\{e^{k}\}$ is a basis for $X$, where $e^{k}$ is a sequence whose only non-zero term is a $1$ in $k^{th}$ place for each $k\in \mathbb{N}$ and $\phi=span\{e^{k}\}$, the set of all finitely non-zero sequences.

Let $X$ and $Y$ be two sequence spaces, and $A=(a_{nk})$ be an infinite matrix of complex numbers $a_{nk}$, where $k,n\in\mathbb{N}$. Then, we say that $A$ defines a matrix mapping from $X$ into $Y$, and we denote it by writing $A :X \rightarrow Y$ if for every sequence $x=(x_{k})\in X$. The sequence $Ax=\{(Ax)_{n}\}$, the $A$-transform of $x$, is in $Y$; where
\begin{eqnarray}\label{equ1}
(Ax)_{n}=\sum_{k}a_{nk}x_{k}~\textrm{ for each }~n\in\mathbb{N}.
\end{eqnarray}
For simplicity in notation, here and in what follows, the summation without limits runs from $1$ to $\infty$. By $(X:Y)$, we denote the class of all matrices $A$ such that $A:X \rightarrow Y $. Thus, $A\in(X:Y)$ if and only if the series on the right side of (\ref{equ1}) converges for each $n\in\mathbb{N}$ and each $x\in X$ and we have $Ax =\{(Ax)_{n}\}_{n \in\mathbb{N}}\in Y$ for all $x\in X$. A sequence $x$ is said to be $A$-summable to $l$ if $Ax$ converges to $l$ which is called the $A$-limit of $x$.\\

Let $X$ is a sequence space and $A$ is an infinite matrix. The sequence space
\begin{eqnarray}\label{eq0}
X_{A}=\{x=(x_{k})\in\omega:Ax\in X\}
\end{eqnarray}
is called the matrix domain of $X$ which is a sequence space(for several examples of matrix domains, see \cite{Basarkitap} p. 49-176). By $\mathcal{F}$, we will denote the collection of all finite subsets on $\mathbb{N}$. In \cite{AB2}, Ba\c{s}ar and Altay have defined the sequence space $bv_{p}$ which consists of all sequences such that $\Delta$-transforms of them are in $\ell_{p}$ where $\Delta$ denotes the matrix $\Delta=(\delta_{nk})$
\begin{eqnarray*}
\delta_{nk}= \left\{ \begin{array}{ccl}
(-1)^{n-k}&, & \quad (n-1\leq k \leq n)\\
0&, & \quad (0\leq k < n-1 ~\textrm{ or }~ k>n)
\end{array}\right.
\end{eqnarray*}
for all $k,n\in \mathbb{N}$. And also we define the matrices $\Gamma=(\gamma_{nk})$ and $\Sigma=(\sigma_{nk})$ by
\begin{eqnarray}\label{matr1}
\gamma_{nk}= \left\{ \begin{array}{ccl}
k&, & \quad (n=k)\\
-k&, & \quad (n-1=k)\\
0&, & \quad (other)
\end{array} \right.
\end{eqnarray}

\begin{eqnarray}\label{matr2}
\sigma_{nk}= \left\{ \begin{array}{ccl}
\frac{1}{k}&, & \quad (n=k)\\
-\frac{1}{k}&, & \quad (n-1=k)\\
0&, & \quad (other)
\end{array} \right.
\end{eqnarray}

The integrated and differentiated sequence spaces which was initiated by Goes and Goes \cite{Goes}. In the present paper,
we study of matrix domains and some properties of the integrated and differentiated sequence spaces. In section 3, we compute
the alpha-, beta- and gamma duals of these spaces. Afterward, we characterize matrix classes of these spaces with well-known
sequence spaces.

\section{The Sequence Spaces $\int bv$ and $d(bv)$}

The integrated spaces defined by
\begin{eqnarray*}
\int \ell_{1}&=&\left\{x=(x_{k})\in \omega: \sum_{k}|k.x_{k}|<\infty\right\}\\
\int bv&=&\left\{x=(x_{k})\in \omega: \sum_{k}|k.x_{k}-(k-1).x_{k-1}|<\infty\right\}
\end{eqnarray*}
and the differentiated spaces defined by
\begin{eqnarray*}
d(\ell_{1})&=&\{x=(x_{k})\in \omega: \sum_{k}|k^{-1}.x_{k}|<\infty\}\\
d(bv)&=&\{x=(x_{k})\in \omega: \sum_{k}|k^{-1}.x_{k}-(k-1)^{-1}.x_{k-1}|<\infty\}.
\end{eqnarray*}

Consider the notation (\ref{eq0}) and the matrices (\ref{matr1}), (\ref{matr2}). From here, we can re-define the spaces $\int bv$ and $d(bv)$ by
\begin{eqnarray}\label{domain}
\left(\int \ell_{1}\right)_{\Delta}=\int bv ~\textrm{ or }~ \left(\ell_{1}\right)_{\Gamma}=\int bv
\end{eqnarray}
and
\begin{eqnarray}\label{domain1}
\left[d(\ell_{1})\right]_{\Delta}=d(bv) ~\textrm{ or }~ \left(\ell_{1}\right)_{\Sigma}=d(bv).
\end{eqnarray}

Let $x=(x_{k})\in \int bv$ and $\Delta x_{k}=x_{k}-x_{k-1}$. The $\Gamma-$transform of a sequence $x=(x_{k})$ is defined by
\begin{eqnarray}\label{transform1}
y_{k}=(\Gamma x)_{k}=\left\{ \begin{array}{ccl}
x_{1}&, & \quad k=1\\
\Delta(kx_{k})&, & \quad k\geq 2
\end{array} \right.
\end{eqnarray}
where $\Gamma$ is defined by (\ref{matr1}).
Let $x=(x_{k})\in d(bv)$ and $\Delta x_{k}=x_{k}-x_{k-1}$. The $\Sigma-$transform of a sequence $x=(x_{k})$ is defined by
\begin{eqnarray}\label{transform2}
y_{k}=(\Sigma x)_{k}=\left\{ \begin{array}{ccl}
x_{2}/2&, & \quad k=2\\
\Delta(k^{-1}x_{k})&, & \quad k\geq 3
\end{array} \right.
\end{eqnarray}
where $\Sigma$ is defined by (\ref{matr2}).

\begin{thm}
The spaces $\int\ell_{1}$ and $d(\ell_{1})$ are $BK-$spaces with the norms $\|x\|_{\int\ell_{1}}=\sum_{k}|kx_{k}|$ and
$\|x\|_{d(\ell_{1})}=\sum_{k}|k^{-1}x_{k}|$, respectively.
\end{thm}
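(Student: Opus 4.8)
The plan is to exploit the fact that each of these spaces is, up to a diagonal rescaling, a copy of $\ell_{1}$, and then to transport the known $BK$-structure of $\ell_{1}$ across the rescaling. Concretely, I would introduce the diagonal maps $D\colon \int\ell_{1} \to \ell_{1}$, $(Dx)_{k} = kx_{k}$, and $D'\colon d(\ell_{1})\to \ell_{1}$, $(D'x)_{k} = k^{-1}x_{k}$. By the very definitions of $\int\ell_{1}$ and $d(\ell_{1})$ these maps are well defined, and since $k \neq 0$ for every $k\geq 1$ they are linear bijections with inverses $(y_{k})\mapsto (k^{-1}y_{k})$ and $(y_{k})\mapsto (ky_{k})$ respectively. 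The defining norms are precisely the pulled-back $\ell_{1}$-norms, that is $\|x\|_{\int\ell_{1}} = \|Dx\|_{\ell_{1}}$ and $\|x\|_{d(\ell_{1})} = \|D'x\|_{\ell_{1}}$, so $D$ and $D'$ are isometric isomorphisms onto $\ell_{1}$.

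Granting this, the norm axioms for $\|\cdot\|_{\int\ell_{1}}$ and $\|\cdot\|_{d(\ell_{1})}$ are immediate from those for $\ell_{1}$; the only point worth checking is definiteness, which holds because $\sum_{k} k|x_{k}| = 0$ forces $x_{k} = 0$ for all $k$ (as $k\geq 1$), and likewise $\sum_{k} k^{-1}|x_{k}| = 0$ forces $x = 0$. Completeness is then inherited: $\ell_{1}$ is a Banach space, an isometric isomorphism carries Cauchy sequences to Cauchy sequences and preserves limits, so both $\int\ell_{1}$ and $d(\ell_{1})$ are Banach spaces under the stated norms.

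It remains to verify the $K$-space (coordinate-continuity) property, which is the one place the two spaces behave slightly differently. For $\int\ell_{1}$ I would estimate, for each fixed $k\geq 1$,
\[
|x_{k}| = \frac{1}{k}\,|kx_{k}| \leq \frac{1}{k}\sum_{j} j|x_{j}| \leq \|x\|_{\int\ell_{1}},
\]
so every coordinate functional $p_{k}$ has norm at most $1$ and is therefore continuous. For $d(\ell_{1})$ the analogous estimate is $|x_{k}| = k\,|k^{-1}x_{k}| \leq k\sum_{j} j^{-1}|x_{j}| = k\,\|x\|_{d(\ell_{1})}$, so $p_{k}$ is bounded (with norm at most $k$) and hence continuous for each fixed $k$; the growth of the bound in $k$ is harmless since continuity is required only coordinatewise.

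There is essentially no hard step here: the argument is routine once the isometry with $\ell_{1}$ is set up, and the only thing to keep an eye on is that the coordinate bound for $d(\ell_{1})$ deteriorates with $k$, so one must not expect a uniform-in-$k$ estimate. I would note in passing that this theorem is also a special case of the general principle that the matrix domain of a $BK$-space under a triangle (here the diagonal matrices $\mathrm{diag}(k)$ and $\mathrm{diag}(k^{-1})$) is again a $BK$-space with norm $\|x\| = \|Dx\|$, so one could alternatively just invoke that fact.
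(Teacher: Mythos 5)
Your proposal is correct, and its coordinate-continuity step is essentially the paper's argument: the paper likewise derives $k|x_{k}|\leq\|x\|_{\int\ell_{1}}$ and concludes that each $f_{k}(x)=x_{k}$ is a bounded linear functional (with a constant that, as you note, must be allowed to depend on $k$; the paper writes this somewhat loosely as $|x_{k}|\leq K\|x\|_{\int\ell_{1}}$ and handles $d(\ell_{1})$ only by saying ``similarly''). Where you go beyond the paper is in actually verifying the Banach-space half of the $BK$ definition: the paper's proof stops after checking continuity of the coordinates and never addresses completeness, whereas you transport it from $\ell_{1}$ via the diagonal isometries $(Dx)_{k}=kx_{k}$ and $(D'x)_{k}=k^{-1}x_{k}$. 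This is a genuine gap in the paper's write-up that your argument closes, and your closing remark --- that the result is an instance of the general fact that the matrix domain of a $BK$-space under a triangle is again a $BK$-space --- is exactly the mechanism (Wilansky, Theorem 4.3.2) that the paper itself invokes later for $d(bv)$, so it is a legitimate one-line alternative here as well. One small caution: your isometry argument implicitly identifies $\int\ell_{1}$ and $d(\ell_{1})$ with $(\ell_{1})_{D}$ and $(\ell_{1})_{D'}$ for diagonal triangles $D$, which is fine, but you should state that identification explicitly rather than treat it as obvious, since the spaces are defined set-theoretically in the paper rather than as matrix domains.
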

\begin{proof}
Let $x=(x_{k})\in \int\ell_{1}$. We define $f_{k}(x)=x_{k}$ for all $k\in \mathbb{N}$. Then, we have
\begin{eqnarray*}
\|x\|_{\int\ell_{1}}=1.|x_{1}|+2.|x_{2}|+3.|x_{3}|+\cdots+k.|x_{k}|+\cdots
\end{eqnarray*}
Hence $k.|x_{k}|\leq \|x\|_{\int\ell_{1}} \Rightarrow |x_{k}|\leq K.\|x\|_{\int\ell_{1}} \Rightarrow |f_{k}(x)|\leq K.\|x\|_{\int\ell_{1}}$.
Then, $f_{k}$ is continuous linear functional for each $k$. Thus $\int\ell_{1}$ is a $BK-$space.\\

In a similar way, we can prove that the space $d(\ell_{1})$ is a $BK-$spaces.
\end{proof}

\begin{lem}\cite{Goes}
The space $\int bv$ is a $BK-$space with the norm $\|x\|_{\int bv}=\sum_{k}|\Delta(kx_{k})|$.
\end{lem}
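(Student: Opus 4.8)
The plan is to exploit the identification $\int bv=(\ell_{1})_{\Gamma}$ recorded in (\ref{domain}), together with the fact that $\ell_{1}$ is a $BK$-space, and to transport the $BK$-structure of $\ell_{1}$ along the $\Gamma$-transform. First I would observe that the matrix $\Gamma$ in (\ref{matr1}) is a triangle: its diagonal entries $\gamma_{kk}=k$ are nonzero and $\gamma_{nk}=0$ whenever $k>n$. Reading off (\ref{transform1}) (and noting that at $k=1$ one has $\Delta(1\cdot x_{1})=x_{1}=(\Gamma x)_{1}$), the $\Gamma$-transform satisfies $\sum_{k}|(\Gamma x)_{k}|=\sum_{k}|\Delta(kx_{k})|=\|x\|_{\int bv}$, so the linear map $T:x\mapsto\Gamma x$ sends $\int bv$ isometrically into $\ell_{1}$.

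The central step is to show that $T$ is onto, i.e. that $\Gamma$ is invertible. Solving $\Gamma x=y$ recursively gives $x_{1}=y_{1}$ and, for $k\ge 2$, $kx_{k}=y_{k}+(k-1)x_{k-1}$, which telescopes to $kx_{k}=\sum_{j=1}^{k}y_{j}$; hence $\Gamma^{-1}=(g_{kj})$ with $g_{kj}=1/k$ for $j\le k$ and $g_{kj}=0$ otherwise is again a triangle. Consequently, for any $y\in\ell_{1}$ the sequence $x=\Gamma^{-1}y$ satisfies $\Gamma x=y\in\ell_{1}$, so that $x\in(\ell_{1})_{\Gamma}=\int bv$ and $Tx=y$. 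Thus $T$ is a bijective linear isometry between $\int bv$ and $\ell_{1}$.

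From this isometry everything follows. Homogeneity and the triangle inequality for $\|\cdot\|_{\int bv}$ are inherited from $\|\cdot\|_{\ell_{1}}$, while positive-definiteness is exactly the injectivity of the triangle $\Gamma$, so $\|\cdot\|_{\int bv}$ is a genuine norm. Completeness is then immediate: a Cauchy sequence in $\int bv$ is carried by the isometry $T$ to a Cauchy sequence in the complete space $\ell_{1}$, whose limit $y$ pulls back to $\Gamma^{-1}y\in\int bv$, which is the limit of the original sequence. Finally, to upgrade ``Banach space'' to ``$BK$-space'' I would verify continuity of each coordinate functional $f_{k}(x)=x_{k}$: the telescoping identity $kx_{k}=\sum_{j=1}^{k}\Delta(jx_{j})$ yields $k|x_{k}|\le\sum_{j}|\Delta(jx_{j})|=\|x\|_{\int bv}$, so $|f_{k}(x)|\le k^{-1}\|x\|_{\int bv}$ and each $f_{k}$ is bounded.

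The only delicate point is the surjectivity of the $\Gamma$-transform, equivalently the explicit invertibility of the triangle; once that is in hand, the $BK$-property transfers from $\ell_{1}$ with no further analysis. Alternatively, one may simply invoke the general principle (see \cite{Basarkitap}) that the matrix domain $X_{A}$ of a $BK$-space $X$ under a triangle $A$ is itself a $BK$-space under the norm $\|x\|_{X_{A}}=\|Ax\|_{X}$, applied here with $X=\ell_{1}$ and $A=\Gamma$.
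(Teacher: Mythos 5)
Your argument is correct, but it is worth noting that the paper does not actually prove this lemma at all: it is imported verbatim from Goes and Goes \cite{Goes}, and the only internal evidence of the intended method is the proof of the companion theorem for $d(bv)$, which simply observes that $d(bv)$ is the matrix domain of a $BK$-space under a triangle and invokes Theorem 4.3.2 of Wilansky \cite{Wil84}. That is precisely the ``general principle'' you relegate to your final sentence. Your main line of argument is instead a self-contained verification of that principle in this special case: you check that $\Gamma$ is a triangle, invert it explicitly via the telescoping identity $kx_{k}=\sum_{j=1}^{k}y_{j}$, transport norm, completeness and positive-definiteness along the resulting isometry with $\ell_{1}$, and bound the coordinate functionals by $|x_{k}|\leq k^{-1}\|x\|_{\int bv}$. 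All of these steps are sound (the explicit inverse you compute is the same one the paper uses later in the proof of Theorem \ref{isomorph}, and your coordinate estimate is the same telescoping trick used in the paper's Theorem 2.1 for $\int\ell_{1}$). What your route buys is transparency and independence from Wilansky's machinery; what the citation route buys is brevity and uniformity, since the identical one-line argument covers $\int bv$, $d(bv)$, and any other matrix domain of $\ell_{1}$ under a triangle. Either is acceptable; yours is the more informative for a reader who wants to see where the norm formula and the coordinate bounds actually come from.
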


\begin{thm}
The space $d(bv)$ is a $BK-$space with the norm $\|x\|_{d(bv)}=\sum_{k}|\Delta(k^{-1}x_{k})|$.
\end{thm}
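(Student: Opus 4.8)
The plan is to exploit the representation $d(bv)=(\ell_{1})_{\Sigma}$ recorded in (\ref{domain1}), together with the fact that $\ell_{1}$ is itself a $BK$-space under $\|x\|_{\ell_{1}}=\sum_{k}|x_{k}|$. The key structural observation is that the matrix $\Sigma=(\sigma_{nk})$ in (\ref{matr2}) is a \emph{triangle}: it is lower triangular and its diagonal entries $\sigma_{nn}=1/n$ are all nonzero. Consequently $\Sigma$ is invertible and its inverse $\Sigma^{-1}=(\tau_{nk})$ is again a triangle, so that each coordinate can be recovered from the $\Sigma$-transform $y=\Sigma x$ by the finite sum $x_{k}=\sum_{j\leq k}\tau_{kj}y_{j}$. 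This makes the linear map $T\colon d(bv)\to\ell_{1}$, $T(x)=\Sigma x$, a bijection onto $\ell_{1}$, and the prescribed norm is precisely the transported norm $\|x\|_{d(bv)}=\|\Sigma x\|_{\ell_{1}}=\sum_{k}|\Delta(k^{-1}x_{k})|$ by (\ref{transform2}).

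First I would check that $\|\cdot\|_{d(bv)}$ is a genuine norm. Homogeneity and subadditivity are inherited directly from $\|\cdot\|_{\ell_{1}}$ because $\Sigma$ is linear; for definiteness, $\|x\|_{d(bv)}=0$ forces $y=\Sigma x=0$ and hence $x=\Sigma^{-1}y=0$ via the triangularity of $\Sigma^{-1}$. Thus $T$ is a linear isometry of $(d(bv),\|\cdot\|_{d(bv)})$ onto $(\ell_{1},\|\cdot\|_{\ell_{1}})$. I would then establish completeness by transporting it through $T$: if $(x^{(m)})$ is Cauchy in $d(bv)$, then $(\Sigma x^{(m)})$ is Cauchy in the complete space $\ell_{1}$, hence converges to some $y\in\ell_{1}$; putting $x=\Sigma^{-1}y$ yields $x\in d(bv)$ with $\Sigma x=y$, so that $\|x^{(m)}-x\|_{d(bv)}=\|\Sigma x^{(m)}-y\|_{\ell_{1}}\to0$. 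Hence $d(bv)$ is a Banach space. This is the standard principle that the matrix domain of a $BK$-space under a triangle is again a $BK$-space (cf. \cite{Basarkitap}), and it is the exact analogue of the Goes lemma for $\int bv$.

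For the $BK$ property it then remains to verify continuity of the coordinate functionals $f_{k}(x)=x_{k}$, mirroring the argument already used for $\int\ell_{1}$. Writing $x_{k}=\sum_{j\leq k}\tau_{kj}y_{j}$ with $y=\Sigma x$ and setting $C_{k}=\max_{j\leq k}|\tau_{kj}|$, one obtains $|f_{k}(x)|=|x_{k}|\leq C_{k}\sum_{j}|y_{j}|=C_{k}\,\|x\|_{d(bv)}$, so each $f_{k}$ is bounded and therefore continuous. Combined with completeness, this gives the claim.

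I expect the genuine obstacle to be the bookkeeping around the index shift in (\ref{transform2}): the $\Sigma$-transform is prescribed only from $k=2$ onwards, the value at $k=2$ is $x_{2}/2$ rather than a full difference, and $(k-1)^{-1}$ is singular at $k=1$. These conventions must be reconciled with the formal definition (\ref{matr2}) to confirm that $\Sigma$ is truly a triangle in the operative index range and to write its triangular inverse $\Sigma^{-1}$ explicitly enough that the recovery formula $x_{k}=\sum_{j\leq k}\tau_{kj}y_{j}$—and hence both the isometry onto $\ell_{1}$ and the coordinate bounds—hold without exception. Once this indexing is pinned down, every other step is routine and parallels the proof of the companion space $\int bv$.
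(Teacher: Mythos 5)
Your proof is correct, and it rests on the same underlying principle as the paper's, but it takes a noticeably different (and more self-contained) route. The paper's proof is a one-line appeal to Wilansky's Theorem 4.3.2 applied to the factorization $d(bv)=\left[d(\ell_{1})\right]_{\Delta}$: since $d(\ell_{1})$ has already been shown to be a $BK$-space and $\Delta$ is a triangle, the matrix domain is again a $BK$-space. You instead use the other factorization recorded in (\ref{domain1}), namely $d(bv)=(\ell_{1})_{\Sigma}$, and rather than citing the general theorem you re-derive it in this special case: you verify the norm axioms, transport completeness from $\ell_{1}$ through the isometry $x\mapsto\Sigma x$, and bound the coordinate functionals via the triangular inverse $x_{k}=k\sum_{j\leq k}y_{j}$, which gives the explicit estimate $|x_{k}|\leq k\,\|x\|_{d(bv)}$. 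What the paper's route buys is brevity and uniformity with the treatment of $\int bv$; what yours buys is that the reader sees exactly why the coordinates are continuous, and it does not depend on first establishing that $d(\ell_{1})$ is a $BK$-space. The indexing caveat you raise at the end is a real wrinkle in the paper's own definitions (the transform (\ref{transform2}) starts at $k=2$ and $(k-1)^{-1}$ is singular at $k=1$), and the paper simply glosses over it; your proof would be complete once that convention is fixed, e.g.\ by starting all sums at $k=2$ or by adopting the row of $\Sigma$ in (\ref{matr2}) with the first row taken to be $\sigma_{11}=1$.
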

\begin{proof}
Since $d(bv)=\left[d(\ell_{1})\right]_{\Delta}$ holds, $d(\ell_{1})$ is a $BK-$space with the norm $\|x\|_{d(\ell_{1})}$ and
the matrix $\Delta$ is a triangle matrix, then Theorem 4.3.2 of Wilansky\cite{Wil84} gives the fact that the space $d(bv)$ is
a $BK-$space.
\end{proof}

\begin{thm}
\begin{itemize}
\item [(i).] The spaces $\int\ell_{1}$ and $d(\ell_{1})$ have $AK-$property.
\item [(ii).] The spaces $\int bv$ and $d(bv)$ have $AK-$property.
\end{itemize}
\end{thm}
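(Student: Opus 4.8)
The plan is to unwind the $AK$-property into the statement that the sections $x^{[n]}=\sum_{k=1}^{n}x_{k}e^{k}=(x_{1},\dots,x_{n},0,0,\dots)$ converge to $x$ in norm, i.e.\ $\|x-x^{[n]}\|\to 0$ for every $x$ in the space; combined with the obvious inclusion $\phi\subset X$, this is exactly what is needed for $\{e^{k}\}$ to be a Schauder basis enjoying $AK$. Part (i) then falls out immediately. For $x\in\int\ell_{1}$ we have $x-x^{[n]}=(0,\dots,0,x_{n+1},x_{n+2},\dots)$, so
\[
\|x-x^{[n]}\|_{\int\ell_{1}}=\sum_{k>n}|kx_{k}|,
\]
which is the tail of the convergent series $\sum_{k}|kx_{k}|<\infty$ and hence tends to $0$; replacing the weight $k$ by $k^{-1}$ handles $d(\ell_{1})$ verbatim. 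No boundary effect appears because the norm is a plain weighted $\ell_{1}$ norm.

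For part (ii) the tempting shortcut is to transport the $AK$-property of $\ell_{1}$ across the isomorphisms $\Gamma\colon\int bv\to\ell_{1}$ and $\Sigma\colon d(bv)\to\ell_{1}$. This must be resisted: an isomorphism carries the canonical basis of $\ell_{1}$ to $\{\Gamma^{-1}e^{k}\}$, not to $\{e^{k}\}$, so it yields only that $\int bv$ has \emph{some} Schauder basis, never that the coordinate vectors $e^{k}$ form one. I would instead compute the sections head-on. With $z=x-x^{[n]}$ and $\Delta(kz_{k})=kz_{k}-(k-1)z_{k-1}$, every term with $k\le n$ vanishes, the terms with $k\ge n+2$ recombine into the tail $\sum_{k>n+1}|\Delta(kx_{k})|$, and one isolated boundary term survives at $k=n+1$:
\[
\|x-x^{[n]}\|_{\int bv}=(n+1)\,|x_{n+1}|+\sum_{k>n+1}|\Delta(kx_{k})|.
\]

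The boundary term is where I expect the argument to fail, and fatally rather than technically. Put $u_{k}=kx_{k}$; the membership $x\in\int bv$ says exactly $\sum_{k}|\Delta u_{k}|<\infty$, i.e.\ $(u_{k})\in bv$, so $u_{k}$ converges to $L=\sum_{j}\Delta u_{j}$ and $(n+1)|x_{n+1}|=|u_{n+1}|\to|L|$. This limit is $0$ only when $L=\lim_{k}kx_{k}=0$. The sequence $x=(1/k)$ is a concrete obstruction: here $u_{k}\equiv 1$, so $x\in\int bv$ with $\|x\|_{\int bv}=1$, yet $\|x-x^{[n]}\|_{\int bv}=1$ for every $n$. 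Hence $\int bv$ does not have $AK$; the same computation with $v_{k}=k^{-1}x_{k}$ and the witness $x=(k)$ (for which $v_{k}\equiv 1$) shows $d(bv)$ fails $AK$ as well.

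I would therefore not expect to prove statement (ii) as written. What should survive is the weaker and correct assertion that $\{e^{k}\}$ is a basis only on the subspaces $\int bv_{0}$ and $d(bv_{0})$ singled out by $\lim_{k}kx_{k}=0$ and $\lim_{k}k^{-1}x_{k}=0$ respectively, while the full spaces $\int bv$ and $d(bv)$ carry the transported bases $\{\Gamma^{-1}e^{k}\}$ and $\{\Sigma^{-1}e^{k}\}$ but lack the $AK$-property.
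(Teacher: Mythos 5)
Your treatment of part (i) is correct and complete; note that the paper in fact offers no argument for (i) at all (its proof only addresses $d(bv)$ and cites Goes and Goes for $\int bv$), so your tail-of-a-convergent-series computation supplies exactly what is missing there.

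For part (ii) your diagnosis is right, and it locates precisely the flaw in the paper's own proof. The paper asserts, for the truncation $z=x-x^{[n]}$ in $d(bv)$, that
\[
\|x-x^{[n]}\|_{d(bv)}=\sum_{k\geq n+1}\bigl|\Delta(k^{-1}x_{k})\bigr|,
\]
and treats the right-hand side as a tail of the convergent series $\sum_{k}|\Delta(k^{-1}x_{k})|$. But the $k=n+1$ summand of $\sum_{k}|\Delta(k^{-1}z_{k})|$ is $(n+1)^{-1}|x_{n+1}|$, because $z_{n}=0$; it is not $|(n+1)^{-1}x_{n+1}-n^{-1}x_{n}|$. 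This is exactly the boundary term you isolate. Since membership in $d(bv)$ (resp.\ $\int bv$) only forces $k^{-1}x_{k}$ (resp.\ $kx_{k}$) to converge, not to converge to $0$, that term need not vanish, and your witnesses are valid: for $x=(1/k)$ one has $kx_{k}\equiv 1$, so $x\in\int bv$ while $\|x-x^{[n]}\|_{\int bv}=1$ for every $n$, and symmetrically $x=(k)$ defeats $d(bv)$. With the definitions adopted in this paper, part (ii) is therefore false, and the sections converge exactly on the subspaces cut out by $\lim_{k}kx_{k}=0$ and $\lim_{k}k^{-1}x_{k}=0$, as you say. You are also right to resist transporting $AK$ through the isomorphisms of Theorem 2.5: that only produces the bases $\{\Gamma^{-1}e^{k}\}$ and $\{\Sigma^{-1}e^{k}\}$, not the coordinate vectors. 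The most plausible reconciliation with the cited reference is that Goes and Goes work with a version of $bv$ whose elements are required to tend to $0$, i.e.\ with what you call $\int bv_{0}$; the statement should be corrected accordingly rather than proved as written.
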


\begin{proof}
The fact that of the space $\int bv$ has $AK-$property was given by Goes and Goes\cite{Goes}. Then, we will only prove that the space $d(bv)$ has $AK-$property in (ii).\\

Let $x=(x_{k})\in d(bv)$ and $x^{[n]}=\{x_{1}, x_{2}, \cdots, x_{n},0,0,\cdots\}$.
Hence,
\begin{eqnarray*}
x-x^{[n]}=\{0,0,\cdots,0,x_{n+1}, x_{n+2},\cdots\} \Rightarrow \|x-x^{[n]}\|_{d(bv)}=\|0,0,\cdots,0,x_{n+1}, x_{n+2},\cdots\|
\end{eqnarray*}
and since $x\in d(bv)$,
\begin{eqnarray*}
\|x-x^{[n]}\|_{d(bv)}=\sum_{k\geq n+1}|\Delta(k^{-1}x_{k})|\rightarrow 0 ~\textrm{ as}~\ n\rightarrow \infty\\
\Rightarrow \lim_{n\rightarrow \infty}\|x-x^{[n]}\|_{d(bv)}=0 \Rightarrow x^{[n]}\rightarrow \infty ~\textrm{ as}~\ n\rightarrow \infty ~\textrm{ in}~\ d(bv).
\end{eqnarray*}
Then the space $d(bv)$ has $AK-$property.
\end{proof}

\begin{thm}\label{isomorph}
The spaces $\int bv$ and $d(bv)$ are norm isomorphic to $\ell_{1}$.
\end{thm}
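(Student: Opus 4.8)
The plan is to exhibit, for each space, an explicit linear isometry onto $\ell_{1}$ built from the transforms already introduced. For $\int bv$ I would define $T:\int bv \to \ell_{1}$ by $T(x)=y$, where $y=\Gamma x$ is the $\Gamma$-transform of (\ref{transform1}); for $d(bv)$ I would likewise set $S(x)=\Sigma x$ from (\ref{transform2}). Linearity of both maps is immediate, since $\Gamma$ and $\Sigma$ act as matrix multiplications. The norm-preservation is essentially built into the definitions: by the norm on $\int bv$ established in the preceding lemma, $\|Tx\|_{\ell_{1}}=\sum_{k}|y_{k}|=\sum_{k}|\Delta(kx_{k})|=\|x\|_{\int bv}$, and analogously $\|Sx\|_{\ell_{1}}=\|x\|_{d(bv)}$. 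Thus the only genuine work is to show that $T$ and $S$ are bijections.

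Injectivity is quick. Both $\Gamma$ and $\Sigma$ are triangles: the diagonal entry $\gamma_{kk}=k$ (respectively $\sigma_{kk}=1/k$) is nonzero for every $k$, so the only sequence with $\Gamma x=0$ (resp.\ $\Sigma x=0$) is $x=0$, whence $\ker T=\ker S=\{0\}$.

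For surjectivity I would invert each triangle explicitly. Starting from $y_{1}=x_{1}$ and $y_{k}=kx_{k}-(k-1)x_{k-1}$ for $k\geq 2$, the partial sums telescope to $\sum_{j=1}^{k}y_{j}=kx_{k}$, so the unique solution is $x_{k}=\frac{1}{k}\sum_{j=1}^{k}y_{j}$; this is a finite sum and hence defines an element $x\in\omega$ for every $y\in\ell_{1}$. Because $\int bv=(\ell_{1})_{\Gamma}$ by (\ref{domain}), the relation $\Gamma x=y\in\ell_{1}$ forces $x\in\int bv$, so $T$ is onto and $T^{-1}$ is its stated inverse. The same computation for $\Sigma$, writing $u_{k}=k^{-1}x_{k}$, telescopes to $u_{k}=\sum_{j=2}^{k}y_{j}$ and gives $x_{k}=k\sum_{j=2}^{k}y_{j}$; invoking $d(bv)=(\ell_{1})_{\Sigma}$ from (\ref{domain1}) yields surjectivity of $S$ in the same way.

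I expect the main obstacle to be bookkeeping rather than anything conceptual, namely pinning down the inverse formula and the index ranges exactly, especially for $\Sigma$, whose transform is only defined from $k=2$ onward and where the factor $(k-1)^{-1}$ must be treated carefully at the boundary because of the $0^{-1}$ ambiguity at the first coordinate. Once the correct inverse is in hand, every remaining step (linearity, isometry, injectivity, and membership of the preimage in the matrix domain) is routine, and the combination of the four gives that $T$ and $S$ are surjective linear isometries, establishing the claimed norm isomorphisms $\int bv\cong\ell_{1}$ and $d(bv)\cong\ell_{1}$.
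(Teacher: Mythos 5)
Your proposal is correct and follows essentially the same route as the paper: both define $T=\Gamma$ and $S=\Sigma$, observe linearity and injectivity, and establish surjectivity by the explicit inverse $x_{k}=\frac{1}{k}\sum_{j=1}^{k}y_{j}$ (resp.\ $x_{k}=k\sum_{j=2}^{k}y_{j}$), with norm preservation coming from the telescoping identity $\sum_{k}|\Delta(kx_{k})|=\sum_{k}|y_{k}|$. Your extra care with the triangularity argument for injectivity and the index boundary for $\Sigma$ is slightly more explicit than the paper's, but the substance is identical.
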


\begin{proof}
We must show that a linear bijection between the spaces $\int bv$ and $\ell_{1}$ exists. Consider the transformation $T$ defined, with the notation (\ref{transform1}), from $\int bv$ to $\ell_{1}$ by $x \mapsto y=Tx$. The linearity of $T$ is clear. Also, it is trivial that $x=\theta$ whenever $Tx=\theta$ and therefore, $T$ is injective.\\

Let $y\in \ell_{1}$ and define the sequence $x=(x_{k})$ by $x_{k}=\frac{1}{k}.\sum_{j=1}^{k}y_{j}$. Then
\begin{eqnarray*}
\|x\|_{\int bv}&=&\sum_{k}|\Delta(kx_{k})|=\sum_{k}\left|k.\frac{1}{k}\sum_{j=1}^{k}y_{j}-(k-1).\frac{1}{k-1}\sum_{j=1}^{k-1}y_{j}\right|
=\sum_{k}|y_{k}|=\|y\|_{\ell_{1}}<\infty.
\end{eqnarray*}
Then, we have that $x\in \int bv$. So, $T$ is surjective and norm preserving. Hence $T$ is a linear bijection. It shown us that the space $\int bv$ is norm isomorphic to $\ell_{1}$.\\

As similar, using the notation (\ref{transform2}), we can define the transformation $S$ from $d(bv)$ and $\ell_{1}$ by $x \mapsto y=Sx$. And also, if we choose the sequence $x=(x_{k})$ by $x_{k}=k.\sum_{j=2}^{k}y_{j}$ while $y\in \ell_{1}$, then we obtain the space $d(bv)$ is norm isomorphic to $\ell_{1}$
with the norm $\|x\|_{d(bv)}$.
\end{proof}

\begin{thm}
The spaces $\int bv$ and $d(bv)$ have monotone norm.
\end{thm}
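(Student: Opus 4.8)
The plan is to argue directly from the explicit norm formulas $\|x\|_{\int bv}=\sum_{k}|\Delta(kx_{k})|$ and $\|x\|_{d(bv)}=\sum_{k}|\Delta(k^{-1}x_{k})|$. Recall that a $BK$-space $X$ with $\phi\subset X$ is said to have \emph{monotone norm} when the sequence of section norms $\big(\|x^{[n]}\|_{X}\big)_{n}$ is non-decreasing, where $x^{[n]}=(x_{1},\dots,x_{n},0,0,\dots)$ is the $n$-section used in the proof of the $AK$-property. Thus it suffices to show $\|x^{[n]}\|\le\|x^{[n+1]}\|$ for every $x$ and every $n$. The first step is to compute $\|x^{[n]}\|_{\int bv}$ exactly; the point to watch is that truncation does \emph{not} commute with the $\Gamma$-transform of (\ref{transform1}). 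For $k\le n$ the term $\Delta\big(k\,x^{[n]}_{k}\big)$ coincides with $\Delta(kx_{k})$, but at the truncation index $k=n+1$ one picks up the boundary contribution $\Delta\big((n+1)x^{[n]}_{n+1}\big)=-nx_{n}$, while all later terms vanish. Hence
\[
\|x^{[n]}\|_{\int bv}=\sum_{k=1}^{n}|\Delta(kx_{k})|+n|x_{n}|,\qquad
\|x^{[n+1]}\|_{\int bv}=\sum_{k=1}^{n+1}|\Delta(kx_{k})|+(n+1)|x_{n+1}|.
\]

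With these formulas in hand the whole statement collapses to a single elementary estimate. Subtracting, the inequality $\|x^{[n+1]}\|_{\int bv}-\|x^{[n]}\|_{\int bv}\ge 0$ is equivalent to
\[
\big|(n+1)x_{n+1}-nx_{n}\big|+(n+1)|x_{n+1}|-n|x_{n}|\ge 0,
\]
which follows immediately from the reverse triangle inequality $\big|(n+1)x_{n+1}-nx_{n}\big|\ge n|x_{n}|-(n+1)|x_{n+1}|$. I expect the only genuine obstacle to be this boundary-term bookkeeping: once the correct section-norm formula (with its extra term $n|x_{n}|$) is isolated, the monotonicity is a one-line consequence of the triangle inequality, so the work is entirely in getting the truncated $\Gamma$-transform right rather than in any subsequent estimation.

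Finally I would repeat the argument verbatim for $d(bv)$, replacing the weight $k$ by $k^{-1}$ throughout and using the $\Sigma$-transform of (\ref{transform2}) (with its shifted starting index). The section norm then acquires the boundary term $n^{-1}|x_{n}|$, giving
\[
\|x^{[n+1]}\|_{d(bv)}-\|x^{[n]}\|_{d(bv)}=\big|(n+1)^{-1}x_{n+1}-n^{-1}x_{n}\big|+(n+1)^{-1}|x_{n+1}|-n^{-1}|x_{n}|,
\]
and the same reverse triangle inequality $\big|(n+1)^{-1}x_{n+1}-n^{-1}x_{n}\big|\ge n^{-1}|x_{n}|-(n+1)^{-1}|x_{n+1}|$ forces this difference to be non-negative. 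Hence both section-norm sequences are non-decreasing, so $\int bv$ and $d(bv)$ both have monotone norm.
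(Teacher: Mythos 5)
Your computation of the section norm is the right one, and it is more careful than the paper's: the paper simply \emph{defines} $\|x^{[n]}\|_{\int bv}=\sum_{k=1}^{n}|\Delta(kx_{k})|$, i.e.\ it identifies the norm of the $n$-th section with the $n$-th partial sum of the series for $\|x\|$, silently discarding the boundary term $n|x_{n}|$ that you correctly isolate (truncation does not commute with the $\Gamma$-transform, exactly as you say). With the paper's identification, monotonicity is a triviality about partial sums of a series with nonnegative terms; with the true section norm one needs your reverse-triangle-inequality step, which is correct, and the same goes for $d(bv)$.

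However, there is a gap relative to what ``monotone norm'' means here. The paper's own proof (and the standard definition, cf.\ Wilansky \cite{Wil84}) requires two things: that $\|x^{[n]}\|$ be nondecreasing \emph{and} that $\sup_{n}\|x^{[n]}\|=\|x\|$. You prove only the first. Your own formula shows the second fails in general: since $\|x^{[n]}\|_{\int bv}=\sum_{k=1}^{n}|\Delta(kx_{k})|+n|x_{n}|$ and $(kx_{k})$ converges for every $x\in\int bv$ (being of bounded variation), one gets $\sup_{n}\|x^{[n]}\|_{\int bv}=\|x\|_{\int bv}+|\lim_{k}kx_{k}|$, which exceeds $\|x\|_{\int bv}$ unless $kx_{k}\to 0$. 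Concretely, $x_{k}=1/k$ lies in $\int bv$ with $\|x\|_{\int bv}=1$ but $\|x^{[n]}\|_{\int bv}=2$ for every $n$; the analogous example $x_{k}=k$ works in $d(bv)$. So either you must state explicitly that you are using the weaker, monotonicity-only notion, or the statement in the sense the paper intends cannot be rescued by your (correct) bookkeeping, because the identity $\sup_{n}\|x^{[n]}\|=\|x\|$ is genuinely false on all of $\int bv$. As written, the proposal establishes a weaker assertion than the theorem the paper claims to prove.
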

\begin{proof}
Let $x=(x_{k})\in \int bv$. We define the norms $\|x\|_{\int bv}=\sum_{k}|\Delta(kx_{k})|$ and $\|x^{[n]}\|_{\int bv}=\sum_{k=1}^{n}|\Delta(kx_{k})|$, for all $x\in \int bv$. For $n<m$,
\begin{eqnarray*}
\|x^{[n]}\|=\sum_{k=1}^{n}|\Delta(kx_{k})|\leq \sum_{k=1}^{m}|\Delta(kx_{k})=\|x^{[m]}\|,
\end{eqnarray*}
that is,
\begin{eqnarray}\label{mono}
\|x^{[m]}\|\geq \|x^{[n]}\|.
\end{eqnarray}
The sequence $\|x^{[n]}\|$ is monotonically increasing sequence and bounded above.
\begin{eqnarray}\label{mono1}
\sup\|x^{[n]}\|= \sup\left(\sum_{k=1}^{n}|\Delta(kx_{k})|\right)=\left(\sum_{k=1}^{n}|\Delta(kx_{k})|\right)=\|x\|.
\end{eqnarray}
From (\ref{mono}) and (\ref{mono1}), it follows that the space $\int bv$ has the monotone norm.\\

In similar way, we can obtain to the space $d(bv)$ has the monotone norm.
\end{proof}

Because of the isomorphisms $T$ and $S$, defined in the proof of Theorem \ref{isomorph}, are onto the inverse image of the basis $\{e^{(k)}\}_{k\in \mathbb{N}}$ of the space $\ell_{1}$ is the basis of the spaces $\int bv$ and $d(bv)$. Therefore, we have the following:

\begin{thm}
\begin{itemize}
\item [(i).] Define a sequence $t^{(k)}=\{t_{n}^{(k)}\}_{n\in\mathbb{N}}$ of elements of the space $\int bv$ for every fixed $k\in \mathbb{N}$ by
\begin{eqnarray*}
t_{n}^{(k)}= \left\{ \begin{array}{ccl}
1/k&, & \quad (n\geq k)\\
0&, & \quad (n<k)
\end{array} \right.
\end{eqnarray*}
Therefore, the sequence $\{t^{(k)}\}_{k\in\mathbb{N}}$ is a basis for the space $\int bv$ and if we choose $E_{k}=(Ax)_{k}$ for all $k\in\mathbb{N}$, where the matrix $A$ defined by (\ref{matr1}), then any $x\in \int bv$ has a unique representation of the form
\begin{eqnarray*}
x=\sum_{k}E_{k}b^{(k)}.
\end{eqnarray*}
\item [(ii).] Define a sequence $s^{(k)}=\{s_{n}^{(k)}\}_{n\in\mathbb{N}}$ of elements of the space $d(bv)$ for every fixed $k\in \mathbb{N}$ by
\begin{eqnarray*}
s_{n}^{(k)}= \left\{ \begin{array}{ccl}
k&, & \quad (n\geq k)\\
0&, & \quad (n<k)
\end{array} \right.
\end{eqnarray*}
Therefore, the sequence $\{s^{(k)}\}_{k\in\mathbb{N}}$ is a basis for the space $d(bv)$ and if we choose $F_{k}=(Bx)_{k}$ for all $k\in\mathbb{N}$, where the matrix $B$ defined by (\ref{matr2}), then any $x\in d(bv)$ has a unique representation of the form
\begin{eqnarray*}
x=\sum_{k}F_{k}b^{(k)}.
\end{eqnarray*}
\end{itemize}
\end{thm}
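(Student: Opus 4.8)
The plan is to leverage the linear norm-isomorphisms $T:\int bv\to\ell_1$ and $S:d(bv)\to\ell_1$ produced in Theorem~\ref{isomorph}, together with the elementary principle that a bijective linear isometry between $BK$-spaces carries a Schauder basis of one space onto a Schauder basis of the other. Since $\{e^{(k)}\}_{k\in\mathbb{N}}$ is the canonical basis of $\ell_1$, the families $\{T^{-1}e^{(k)}\}$ and $\{S^{-1}e^{(k)}\}$ will be bases of $\int bv$ and $d(bv)$ respectively, so the entire statement reduces to computing these preimages explicitly and identifying the coefficient functionals.

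First I would record the transfer lemma in the form I need: if $y=\sum_k\alpha_k e^{(k)}$ is the unique canonical expansion of $y\in\ell_1$, then applying the continuous map $T^{-1}$ (continuity is free, as $T^{-1}$ is an isometry) gives $T^{-1}y=\sum_k\alpha_k\,T^{-1}e^{(k)}$, and uniqueness of the coefficients is inherited from $\ell_1$ through the injectivity of $T$. Hence $\{T^{-1}e^{(k)}\}$ is a Schauder basis whose $k$-th coefficient for an element $x$ is precisely the $k$-th coordinate of $Tx$; the same holds verbatim for $S$ and $d(bv)$.

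Next I would compute the preimages using the inverse formulas already obtained inside the proof of Theorem~\ref{isomorph}. For $\int bv$ the inverse of $T$ is $x_n=\frac{1}{n}\sum_{j=1}^{n}y_j$, so substituting $y=e^{(k)}$ (a single $1$ in the $k$-th slot) the inner sum equals $1$ exactly when $k\in\{1,\dots,n\}$; thus $(T^{-1}e^{(k)})_n$ equals $1/n$ for $n\ge k$ and $0$ for $n<k$, which is the sequence $t^{(k)}$. For $d(bv)$ the inverse of $S$ is $x_n=n\sum_{j=2}^{n}y_j$, and feeding in $y=e^{(k)}$ gives $(S^{-1}e^{(k)})_n=n$ for $n\ge k$ and $0$ otherwise, which is $s^{(k)}$.

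Finally, to obtain the stated representations, I would take $x\in\int bv$, set $y=Tx$, and observe via (\ref{transform1}) that the coordinates of $y$ are exactly the scalars $E_k=(\Gamma x)_k$; since $y=\sum_k E_k e^{(k)}$ in $\ell_1$, the transfer lemma yields the unique expansion $x=T^{-1}y=\sum_k E_k\,t^{(k)}$, and the argument with $S$, (\ref{transform2}) and $F_k=(\Sigma x)_k$ settles part (ii). The step requiring genuine care is the explicit inversion: one must verify that the \emph{running} index in $(T^{-1}e^{(k)})_n$ produces the weight $1/n$ (respectively the weight $n$ for $S$) rather than a constant depending only on $k$, and, for $d(bv)$, that the summation convention starting at $j=2$ is applied consistently so that the degenerate term at $k=1$ never enters. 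This is the only place where a careless index bookkeeping would derail the proof.
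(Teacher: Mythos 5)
Your approach is exactly the paper's: the paper gives no proof beyond the preceding remark that the inverse images under $T$ and $S$ of the canonical basis $\{e^{(k)}\}$ of $\ell_{1}$ form bases of $\int bv$ and $d(bv)$, and you simply carry out that transfer argument in detail. The transfer lemma you invoke is standard and your use of it is fine. However, your explicit inversion, which is correct, does not produce the sequences printed in the theorem: you get $(T^{-1}e^{(k)})_{n}=1/n$ for $n\geq k$ and $(S^{-1}e^{(k)})_{n}=n$ for $n\geq k$, whereas the statement defines $t_{n}^{(k)}=1/k$ and $s_{n}^{(k)}=k$ for $n\geq k$. These are genuinely different sequences, and the discrepancy matters: for the statement's $t^{(k)}$ one computes $\Delta(n\,t_{n}^{(k)})=1/k$ for every $n>k$, so $\sum_{n}|\Delta(n\,t_{n}^{(k)})|$ diverges and $t^{(k)}\notin\int bv$ at all; similarly $\Sigma s^{(k)}\neq e^{(k)}$. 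So what you have proved is a corrected version of the theorem (with the running index $n$ in place of the fixed index $k$), not the theorem as printed. You flag this index issue at the end but then assert earlier that your computed sequence ``is the sequence $t^{(k)}$,'' which is false relative to the given definition. You should state explicitly that the formulas in the statement must read $1/n$ and $n$, and likewise that $b^{(k)}$ in the displayed expansions should be $t^{(k)}$ and $s^{(k)}$; with those corrections your argument is complete and is the intended one.
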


The result follows from fact that if a space has a Schauder basis, then it is separable. Hence, we can give following corollary:

\begin{cor}
The spaces $\int bv$ and $d(bv)$ are separable.
\end{cor}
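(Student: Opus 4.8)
The plan is to invoke the standard principle that any normed space possessing a Schauder basis is separable, and then to apply it to the two bases exhibited in the previous theorem. First I would record the general fact in the precise form needed: if $(X,\|\cdot\|)$ is a normed space over $\mathbb{C}$ admitting a Schauder basis $\{b_{k}\}$, then $X$ is separable. To prove this I would produce a countable dense set. Writing $\mathbb{Q}(i):=\mathbb{Q}+i\mathbb{Q}$ for the Gaussian rationals, which form a countable subset dense in $\mathbb{C}$, I would set
\[
D=\left\{\sum_{k=1}^{n}q_{k}b_{k}:n\in\mathbb{N},\ q_{k}\in\mathbb{Q}(i)\right\}.
\]
Then $D$ is a countable union over $n$ of the (countable) images of $\mathbb{Q}(i)^{n}$, hence countable.

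Second, I would verify that $D$ is dense. Given $x\in X$ with its unique expansion $x=\sum_{k}\alpha_{k}b_{k}$ and given $\varepsilon>0$, the convergence of the partial sums of the basis expansion lets me choose $n$ with $\|x-\sum_{k=1}^{n}\alpha_{k}b_{k}\|<\varepsilon/2$. Since $\mathbb{Q}(i)$ is dense in $\mathbb{C}$, I may then pick $q_{1},\dots,q_{n}\in\mathbb{Q}(i)$ with $\sum_{k=1}^{n}|\alpha_{k}-q_{k}|\,\|b_{k}\|<\varepsilon/2$, and the triangle inequality yields $\|x-\sum_{k=1}^{n}q_{k}b_{k}\|<\varepsilon$. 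Thus every $x\in X$ is a limit of points of $D$, so $D$ is dense and $X$ is separable.

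Finally I would specialize: by the previous theorem the sequence $\{t^{(k)}\}_{k\in\mathbb{N}}$ is a Schauder basis for $\int bv$ and $\{s^{(k)}\}_{k\in\mathbb{N}}$ is a Schauder basis for $d(bv)$, while by the earlier results both spaces are $BK$-spaces and in particular normed spaces over $\mathbb{C}$. The general fact therefore applies verbatim to each, giving the separability of $\int bv$ and of $d(bv)$.

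I do not expect a genuine obstacle here, since the statement is a direct corollary of the existence of a Schauder basis. The only points deserving attention are that the scalar field is $\mathbb{C}$, so the countable coefficient set must be the complex (Gaussian) rationals rather than $\mathbb{Q}$, and that the final truncation-and-approximation step uses only the norm together with the convergence of basis expansions, both of which are guaranteed by the $BK$-space structure established earlier.
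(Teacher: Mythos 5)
Your proposal is correct and follows essentially the same route as the paper, which simply invokes the fact that a space with a Schauder basis is separable and applies it to the bases $\{t^{(k)}\}$ and $\{s^{(k)}\}$ from the preceding theorem. The only difference is that you also supply the standard proof of that fact (finite linear combinations with Gaussian-rational coefficients), which the paper omits.
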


\section{The $\alpha-$, $\beta-$ and $\gamma-$ Duals of the spaces $\int bv$ and $d(bv)$}
In this section, we state and prove the theorems determining the $\alpha$-, $\beta$- and $\gamma$-duals of the sequence spaces $\int bv$ and $d(bv)$.

Let $x$ and $y$ be sequences, $X$ and $Y$ be subsets of $\omega$ and $A=(a_{nk})_{n,k=0}^{\infty}$ be an infinite matrix of complex numbers. We write $xy=(x_{k}y_{k})_{k=0}^{\infty}$, $x^{-1}*Y=\{a\in\omega: ax\in Y\}$ and $M(X,Y)=\bigcap_{x\in X}x^{-1}*Y=\{a\in\omega: ax\in Y ~\textrm{ for all }~ x\in X\}$ for the \emph{multiplier space} of $X$ and $Y$. In the special cases of $Y=\{\ell_{1}, cs, bs\}$, we write $x^{\alpha}=x^{-1}*\ell_{1}$, $x^{\beta}=x^{-1}*cs$, $x^{\gamma}=x^{-1}*bs$ and $X^{\alpha}=M(X,\ell_{1})$, $X^{\beta}=M(X,cs)$, $X^{\gamma}=M(X,bs)$ for the $\alpha-$dual, $\beta-$dual, $\gamma-$dual of $X$. By $A_{n}=(a_{nk})_{k=0}^{\infty}$ we denote the sequence in the $n-$th row of $A$, and we write $A_{n}(x)=\sum_{k=0}^{\infty}a_{nk}x_{k}$ $n=(0,1,...)$ and $A(x)=(A_{n}(x))_{n=0}^{\infty}$, provided $A_{n}\in x^{\beta}$ for all $n$.\\

\begin{lem}\label{duallem1}\cite[Theorem 2.1]{AB}
Let $\lambda, \mu$ be the BK-spaces and $B_{\mu}^{U}=(b_{nk})$ be defined via the sequence $\alpha=(\alpha_{k})\in\mu$ and triangle matrix $U=(u_{nk})$ by
\begin{eqnarray*}
b_{nk}=\sum_{j=k}^n\alpha_{j}u_{nj}v_{jk}
\end{eqnarray*}
for all $k,n\in\mathbb{N}$. Then, the inclusion $\mu\lambda_{U}\subset\lambda_{U}$ holds if and only if the matrix $B_{\mu}^{U}=UD_{\alpha}U^{-1}$ is in the classes $(\lambda:\lambda)$, where $D_{\alpha}$ is the diagonal matrix defined by $[D_{\alpha}]_{nn}=\alpha_{n}$ for all $n\in\mathbb{N}$.
\end{lem}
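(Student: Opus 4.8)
The plan is to exploit the fact that a triangle $U$ is invertible and induces a linear bijection between $\lambda_{U}$ and $\lambda$, so that the asserted equivalence is essentially nothing more than a change of variable. First I would record that, since $U=(u_{nk})$ is a triangle, it has a unique inverse $V=U^{-1}=(v_{nk})$ which is again a triangle, and that the map $x\mapsto y=Ux$ is a linear bijection of $\lambda_{U}$ onto $\lambda$ with inverse $y\mapsto x=Vy$. Indeed, for any $y\in\lambda$ the sequence $x=Vy$ satisfies $Ux=UVy=y\in\lambda$, so $x\in\lambda_{U}$, which gives surjectivity, while injectivity follows from $Ux=\theta\Rightarrow x=V\theta=\theta$.

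Next I would fix $\alpha=(\alpha_{k})\in\mu$ and an arbitrary $x\in\lambda_{U}$, put $y=Ux\in\lambda$, and compute the $n$-th coordinate of $U(\alpha x)$. Writing $x_{j}=(Vy)_{j}=\sum_{k}v_{jk}y_{k}$ and substituting gives $[U(\alpha x)]_{n}=\sum_{j}u_{nj}\alpha_{j}\sum_{k}v_{jk}y_{k}$. Because both $U$ and $V$ are triangles, for each fixed $n$ every sum occurring here is finite, so the order of summation may be interchanged without any convergence concern; collecting the coefficient of $y_{k}$ then yields $[U(\alpha x)]_{n}=\sum_{k}b_{nk}y_{k}=(B_{\mu}^{U}y)_{n}$, where $b_{nk}=\sum_{j=k}^{n}\alpha_{j}u_{nj}v_{jk}$ and hence $B_{\mu}^{U}=UD_{\alpha}U^{-1}$, exactly as stated. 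As a by-product $B_{\mu}^{U}$ is lower triangular (being a product of triangles and a diagonal matrix) with diagonal entry $b_{nn}=\alpha_{n}u_{nn}v_{nn}=\alpha_{n}$, so the series defining $(B_{\mu}^{U}y)_{n}$ is in fact a finite sum for every $y$, and membership in $(\lambda:\lambda)$ simply means $B_{\mu}^{U}y\in\lambda$ for all $y\in\lambda$.

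Finally I would assemble the equivalence. From the identity $U(\alpha x)=B_{\mu}^{U}y$ we obtain $\alpha x\in\lambda_{U}\iff U(\alpha x)\in\lambda\iff B_{\mu}^{U}y\in\lambda$. Since, by the first step, $y=Ux$ runs over all of $\lambda$ as $x$ runs over $\lambda_{U}$, the inclusion $\mu\lambda_{U}\subset\lambda_{U}$ — that is, $\alpha x\in\lambda_{U}$ for every $\alpha\in\mu$ and every $x\in\lambda_{U}$ — holds if and only if, for each such $\alpha$, one has $B_{\mu}^{U}y\in\lambda$ for every $y\in\lambda$, i.e. $B_{\mu}^{U}\in(\lambda:\lambda)$. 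This closes both implications simultaneously.

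The argument is short, and I do not expect a genuine analytic obstacle: the triangularity of $U$ and $V$ makes all the relevant sums finite, which is precisely what removes the interchange-of-summation and convergence difficulties that would otherwise plague a general matrix. The only points demanding care are the bookkeeping of the index ranges $k\le j\le n$ in the triangular product $UD_{\alpha}U^{-1}$ and the verification that $x\mapsto Ux$ maps $\lambda_{U}$ onto $\lambda$. The $BK$-hypotheses on $\lambda$ and $\mu$ are not actually needed for the set-theoretic equivalence itself; they serve to guarantee that $\lambda_{U}$ and the matrix class $(\lambda:\lambda)$ are well-behaved (complete, normable) objects, so that the statement carries its intended functional-analytic meaning.
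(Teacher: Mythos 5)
The paper gives no proof of this lemma; it is quoted verbatim as Theorem 2.1 of Altay and Ba\c{s}ar \cite{AB}, so there is no internal argument to compare against. Your proof is correct and is essentially the standard change-of-variables argument used in that reference: the triangle $U$ induces a bijection $x\mapsto y=Ux$ between $\lambda_{U}$ and $\lambda$, the identity $U(\alpha x)=B_{\mu}^{U}y$ follows by a finite interchange of sums, and the equivalence drops out. Your side remarks (row-finiteness of $B_{\mu}^{U}$, $b_{nn}=\alpha_{n}$, and the observation that the $BK$-hypotheses are not needed for the purely set-theoretic equivalence) are all accurate.
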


\begin{lem}\label{duallem2}\cite[Theorem 3.1]{AB}
$B_{\mu}^{U}=(b_{nk})$ be defined via a sequence $a=(a_{k})\in\omega$ and inverse of the triangle matrix $U=(u_{nk})$ by
\begin{eqnarray*}
b_{nk}=\sum_{j=k}^na_{j}v_{jk}
\end{eqnarray*}
for all $k,n\in\mathbb{N}$. Then,
\begin{eqnarray*}
\lambda_{U}^{\beta}=\{a=(a_{k})\in\omega: B^{U}\in(\lambda:c)\}.
\end{eqnarray*}
and
\begin{eqnarray*}
\lambda_{U}^{\gamma}=\{a=(a_{k})\in\omega: B^{U}\in(\lambda:\ell_{\infty})\}.
\end{eqnarray*}
\end{lem}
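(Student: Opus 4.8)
The plan is to reduce both assertions to a single summation identity that turns a partial sum over $\lambda_U$ into an application of the matrix $B^U$ to the corresponding element of $\lambda$. Since $U=(u_{nk})$ is a triangle, it is invertible and the map $y=Ux$ is a bijection from $\lambda_U$ onto $\lambda$ whose inverse is $x=Vy$, where $V=(v_{jk})=U^{-1}$ is again a triangle. Thus every $x\in\lambda_U$ can be written as $x_p=\sum_{q=1}^{p}v_{pq}y_q$ with $y=Ux\in\lambda$, and conversely each $y\in\lambda$ arises in this way.

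First I would fix $a=(a_k)\in\omega$ and $x\in\lambda_U$, set $y=Ux$, and compute the $n$-th partial sum of the series $\sum_k a_k x_k$. Substituting the inverse relation and interchanging the two finite summations gives
\begin{eqnarray*}
\sum_{p=1}^{n}a_p x_p=\sum_{p=1}^{n}a_p\sum_{q=1}^{p}v_{pq}y_q=\sum_{q=1}^{n}\Big(\sum_{p=q}^{n}a_p v_{pq}\Big)y_q=\sum_{q=1}^{n}b_{nq}y_q=(B^U y)_n,
\end{eqnarray*}
where $b_{nq}=\sum_{p=q}^{n}a_p v_{pq}$ is precisely the entry defining $B^U$. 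Because $V$ is a triangle, $b_{nq}=0$ for $q>n$, so every row of $B^U$ is finitely supported and $(B^U y)_n$ is a genuine finite sum; in particular no separate convergence question arises in the interchange, which is a purely finite rearrangement.

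With this identity in hand the two statements follow at once. For the $\beta$-dual, $a\in\lambda_U^{\beta}$ means $ax\in cs$ for every $x\in\lambda_U$, i.e. the partial sums $\big(\sum_{p=1}^{n}a_p x_p\big)_{n}$ converge; by the identity this says $(B^U y)_n$ converges for every $y=Ux$. Since $U$ carries $\lambda_U$ onto $\lambda$, this is equivalent to $B^U y\in c$ for all $y\in\lambda$, that is $B^U\in(\lambda:c)$. For the $\gamma$-dual I would argue identically, replacing $cs$ by $bs$ and convergence of the partial sums by their boundedness, to obtain $B^U y\in\ell_{\infty}$ for all $y\in\lambda$, i.e. $B^U\in(\lambda:\ell_{\infty})$.

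The only point demanding genuine care, and hence the main obstacle, is the legitimacy of passing freely between $x\in\lambda_U$ and $y\in\lambda$: one must invoke that $U$ is a triangle, so that $V=U^{-1}$ exists and the correspondence $x\leftrightarrow y$ is a bijection, and one must note that the finitely supported rows of $B^U$ make $(B^U y)_n$ well-defined for an \emph{arbitrary} $a\in\omega$. Once these are settled, the equivalences for both duals are immediate from the definitions of $cs$, $bs$ and of the matrix classes $(\lambda:c)$ and $(\lambda:\ell_{\infty})$, with no further estimates required.
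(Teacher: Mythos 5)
Your proof is correct: the finite summation interchange giving $\sum_{p=1}^{n}a_{p}x_{p}=(B^{U}y)_{n}$ with $y=Ux$, together with the bijectivity of the triangle $U$ between $\lambda_{U}$ and $\lambda$, is exactly the standard argument for this result. The paper itself states the lemma without proof, citing Altay and Ba\c{s}ar \cite[Theorem 3.1]{AB}, and your argument is essentially the one given there, so there is nothing further to compare.
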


\begin{lem}\label{duallem3}
Let $A=(a_{nk})$ be an infinite matrix.  Then, the following statements hold:
\begin{itemize}
\item[(i)] $A\in (\ell_{1}:\ell_{\infty})$ if and only if
\begin{eqnarray} \label{deq1}
\sup_{k,n \in \mathbb{N}}|a_{nk}| < \infty.
\end{eqnarray}

\item[(ii)] $A\in (\ell_{1}:c)$ if and only if (\ref{deq1}) holds, and there are $\alpha_{k},\in \mathbb{C}$ such that
\begin{eqnarray}\label{deq2}
\lim_{n \rightarrow \infty} a_{nk} = \alpha_{k}~ \textrm{ for each }~k\in\mathbb{N}.
\end{eqnarray}

\item[(iii)] $A\in (\ell_{1}:\ell_{1})$ if and only if
\begin{eqnarray}\label{deq3}
\sup_{k \in \mathbb{N}}\sum_{n}|a_{nk}| < \infty.
\end{eqnarray}

\end{itemize}
\end{lem}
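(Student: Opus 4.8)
These are the classical characterizations of the matrix classes $(\ell_{1}:\ell_{\infty})$, $(\ell_{1}:c)$ and $(\ell_{1}:\ell_{1})$, so the plan is to reproduce the standard summability argument in each case: necessity is obtained by evaluating $A$ on the coordinate sequences $e^{(k)}$ and then upgrading the resulting coordinatewise estimates to uniform ones, while sufficiency follows from elementary estimates exploiting that $\|\cdot\|_{\ell_{1}}$ dominates each coordinate.

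For part (i) I would first observe that each row functional $A_{n}(x)=\sum_{k}a_{nk}x_{k}$, restricted to $\ell_{1}$, is an element of the dual $(\ell_{1})^{*}\cong\ell_{\infty}$ with norm $\|A_{n}\|=\sup_{k}|a_{nk}|$. Sufficiency is then immediate: if $M:=\sup_{n,k}|a_{nk}|<\infty$, then $|A_{n}(x)|\leq M\|x\|_{\ell_{1}}$ for every $n$, so $Ax\in\ell_{\infty}$. For necessity, testing on $e^{(k)}\in\ell_{1}$ forces $\sup_{n}|a_{nk}|<\infty$ for each fixed $k$; to obtain the bound (\ref{deq1}) uniform in both indices I would invoke the uniform boundedness principle (equivalently the closed graph theorem, available since $\ell_{1}$ and $\ell_{\infty}$ are $BK$-spaces), which yields $\sup_{n}\|A_{n}\|=\sup_{n,k}|a_{nk}|<\infty$.

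Part (ii) builds on (i). Since $c\subset\ell_{\infty}$, any $A\in(\ell_{1}:c)$ already satisfies (\ref{deq1}); and evaluating on $e^{(k)}$ shows $(a_{nk})_{n}\in c$, so the limits $\alpha_{k}=\lim_{n}a_{nk}$ exist, giving (\ref{deq2}). Conversely, assuming (\ref{deq1}) and (\ref{deq2}), I would show $\lim_{n}A_{n}(x)=\sum_{k}\alpha_{k}x_{k}$ for every $x\in\ell_{1}$; the interchange of limit and summation is justified by a dominated-convergence type $\varepsilon/2$ argument, splitting the sum at a large index $N$ and using the uniform bound $M$ from (\ref{deq1}) together with the tail $\sum_{k>N}|x_{k}|$ as dominating control. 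This simultaneously shows $(\alpha_{k})\in\ell_{\infty}$ and that the limit is finite, so $Ax\in c$.

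Finally, part (iii) reduces to a Tonelli computation. For sufficiency, if $C:=\sup_{k}\sum_{n}|a_{nk}|<\infty$, then summing absolute values and interchanging the order (legitimate since all terms are nonnegative) gives
\[
\sum_{n}|A_{n}(x)|\leq\sum_{k}|x_{k}|\sum_{n}|a_{nk}|\leq C\|x\|_{\ell_{1}}<\infty,
\]
so $Ax\in\ell_{1}$. For necessity, testing on $e^{(k)}$ yields $\sum_{n}|a_{nk}|<\infty$ for each $k$, and the uniform estimate (\ref{deq3}) follows once more from the continuity of $A:\ell_{1}\to\ell_{1}$ (closed graph theorem) via $\|Ae^{(k)}\|_{\ell_{1}}=\sum_{n}|a_{nk}|\leq\|A\|$. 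Throughout, the only genuinely non-routine ingredient is precisely this passage from coordinatewise to uniform bounds, which is exactly where the completeness and $BK$-structure of the spaces enter; the remaining estimates are mechanical.
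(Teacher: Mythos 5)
Your proof is correct, but note that the paper itself offers no proof of this lemma at all: it is quoted as a known classical characterization of the classes $(\ell_{1}:\ell_{\infty})$, $(\ell_{1}:c)$ and $(\ell_{1}:\ell_{1})$ from the standard summability literature (the Stieglitz--Tietz type tables), so there is no argument in the paper to compare yours against. Your self-contained treatment is the standard one and is sound: the sufficiency directions are the elementary estimates you give, and the passage from the columnwise bounds obtained by testing on $e^{(k)}$ to the uniform bounds in (\ref{deq1}) and (\ref{deq3}) is correctly handled by the uniform boundedness principle (for the family of row functionals $A_{n}$, each of which lies in $(\ell_{1})^{*}\cong\ell_{\infty}$ because $\ell_{1}^{\beta}=\ell_{\infty}$) and by the closed graph theorem for $A:\ell_{1}\to\ell_{1}$, respectively.
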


\begin{thm}\label{alphathm}
$\left[\int bv\right]^{\alpha}=d(\ell_{1})$
\end{thm}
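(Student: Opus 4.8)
The plan is to reduce the computation of the $\alpha$-dual to a single matrix-class membership and then read off the answer from Lemma \ref{duallem3}(iii). By definition, $a=(a_{k})\in\left[\int bv\right]^{\alpha}$ exactly when $ax=(a_{k}x_{k})\in\ell_{1}$ for every $x\in\int bv$. The first step is to use the isomorphism $T$ from Theorem \ref{isomorph}: every $x\in\int bv$ arises as $x_{k}=\frac{1}{k}\sum_{j=1}^{k}y_{j}$ for a unique $y=(y_{k})\in\ell_{1}$, and conversely each such $y$ produces an $x\in\int bv$. Substituting this expression for $x_{k}$ rewrites the defining condition entirely in terms of the free parameter $y$ ranging over all of $\ell_{1}$.

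The second step is to package the coefficients into an infinite matrix. Define $C=(c_{kj})$ by $c_{kj}=a_{k}/k$ for $1\le j\le k$ and $c_{kj}=0$ otherwise, so that $a_{k}x_{k}=\frac{a_{k}}{k}\sum_{j=1}^{k}y_{j}=\sum_{j}c_{kj}y_{j}=(Cy)_{k}$ for each $k$. Hence $ax=Cy$ coordinatewise, and consequently $\sum_{k}|a_{k}x_{k}|<\infty$ for all $x\in\int bv$ if and only if $Cy\in\ell_{1}$ for all $y\in\ell_{1}$; that is, $a\in\left[\int bv\right]^{\alpha}$ if and only if $C\in(\ell_{1}:\ell_{1})$.

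Third, I would invoke Lemma \ref{duallem3}(iii), which states that $C\in(\ell_{1}:\ell_{1})$ if and only if $\sup_{j}\sum_{k}|c_{kj}|<\infty$. Here $\sum_{k}|c_{kj}|=\sum_{k\ge j}|a_{k}|/k$, a quantity that is non-increasing in $j$ because the terms are nonnegative, so the supremum over $j$ is attained at $j=1$ and equals $\sum_{k}|a_{k}|/k$. Thus the matrix condition collapses to $\sum_{k}|k^{-1}a_{k}|<\infty$, which is precisely the statement that $a\in d(\ell_{1})$, yielding both inclusions simultaneously.

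I expect the only delicate points to be the correct identification of $T^{-1}$, so that the substitution $x_{k}=\frac{1}{k}\sum_{j=1}^{k}y_{j}$ is justified for a \emph{general} element of $\int bv$ rather than merely for the basis vectors, together with the coordinatewise verification $ax=Cy$; once these are secured, the monotonicity remark collapsing $\sup_{j}$ to the $j=1$ term is routine. An alternative that avoids the explicit matrix would be to test $a$ against the particular sequences $x=T^{-1}(e^{(k)})$ to obtain necessity and to run a duality/closed-graph argument for sufficiency, but the matrix-class route through Lemma \ref{duallem3}(iii) is shorter and self-contained.
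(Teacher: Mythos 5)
Your argument is correct and follows essentially the same route as the paper: both pass to $y=\Gamma x\in\ell_{1}$ via $x_{k}=\frac{1}{k}\sum_{j=1}^{k}y_{j}$, encode $a_{k}x_{k}=(Cy)_{k}$ with the same triangular matrix $c_{kj}=a_{k}/k$ ($j\le k$), and apply Lemma \ref{duallem3}(iii). You merely spell out the final step (that $\sup_{j}\sum_{k\ge j}|a_{k}|/k$ is attained at $j=1$ and equals $\sum_{k}|k^{-1}a_{k}|$) more explicitly than the paper does.
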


\begin{proof}
We take the matrix $\Gamma$ as defined by (\ref{matr1}) and $\Gamma_{n}$ denotes the sequences in the $n$th rows of the matrices $\Gamma$. We define the matrix $C$ whose rows are the product of the rows of the matrix $\Gamma^{-1}$ and the sequence $a=(a_{n})$, i.e., $C_{n}=(\Gamma^{-1})_{n}a$. From the relation (\ref{transform1}), we obtain
\begin{eqnarray}\label{alphaeq1}
a_{n}x_{n}=\sum_{k=1}^{n}\frac{a_{n}}{n}y_{k}=(Cy)_{n} \quad\quad (n\in \mathbb{N}).
\end{eqnarray}
It follows from (\ref{alphaeq1}) that $ax=(a_{n}x_{n})\in \ell_{1}$ whenever $x=(x_{k})\in \int bv$ if and only if $Cy\in \ell_{1}$ whenever $y\in \ell_{1}$. By using Lemma \ref{duallem3} (iii), we obtain that $\left[\int bv\right]^{\alpha}=d(\ell_{1})$.
\end{proof}

\begin{thm}
$[d(bv)]^{\alpha}=\int \ell_{1}$
\end{thm}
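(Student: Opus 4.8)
The plan is to mirror exactly the argument used in Theorem~\ref{alphathm} for $\left[\int bv\right]^{\alpha}=d(\ell_{1})$, but now working with the matrix $\Sigma$ defined by (\ref{matr2}) in place of $\Gamma$. The governing principle is that $\alpha$-duals are preserved under the isomorphism structure that defines these matrix domains: since $d(bv)=(\ell_{1})_{\Sigma}$ and $\Sigma$ is a triangle, every $x\in d(bv)$ is recovered from its $\Sigma$-transform $y=\Sigma x\in\ell_{1}$ via $x=\Sigma^{-1}y$, and the condition $ax\in\ell_{1}$ can be rewritten as a condition on a derived matrix acting on $y$.

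First I would fix the matrix $\Sigma$ and let $\Sigma_{n}$ denote its $n$th row, then define the matrix $D$ whose $n$th row is the pointwise product $D_{n}=(\Sigma^{-1})_{n}\,a$ of the $n$th row of $\Sigma^{-1}$ with the candidate multiplier sequence $a=(a_{n})$. Using the relation (\ref{transform2}) — equivalently the explicit inverse computation $x_{n}=n\sum_{j=2}^{n}y_{j}$ that already appears in the proof of Theorem~\ref{isomorph} for the map $S$ — I would write
\begin{eqnarray*}
a_{n}x_{n}=a_{n}\cdot n\sum_{k=2}^{n}y_{k}=(Dy)_{n}\qquad(n\in\mathbb{N}),
\end{eqnarray*}
so that the entries of $D$ are $d_{nk}=n\,a_{n}$ for $2\le k\le n$ and $0$ otherwise. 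This is the $\Sigma$-analogue of the display (\ref{alphaeq1}).

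Next I would observe that this identity forces the equivalence: $ax=(a_{n}x_{n})\in\ell_{1}$ for every $x=(x_{k})\in d(bv)$ if and only if $Dy\in\ell_{1}$ for every $y\in\ell_{1}$, i.e.\ $D\in(\ell_{1}:\ell_{1})$. Applying Lemma~\ref{duallem3}(iii), membership $D\in(\ell_{1}:\ell_{1})$ is characterized by $\sup_{k}\sum_{n}|d_{nk}|<\infty$. Since $d_{nk}=n\,a_{n}$ for $n\ge k$ (and $n\ge 2$), the supremum over $k$ of $\sum_{n\ge k}n\,|a_{n}|$ is finite precisely when $\sum_{n}n\,|a_{n}|<\infty$, which is exactly the defining condition of $\int\ell_{1}$. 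I expect the main obstacle to be the careful bookkeeping of the index shift at the initial terms ($k=1,2$) introduced by the nonstandard starting point of the $\Sigma$-transform in (\ref{transform2}); one must verify that these finitely many boundary adjustments do not affect the supremum condition and that the inverse relation $x_{n}=n\sum_{j=2}^{n}y_{j}$ is applied consistently so that the column sums reduce cleanly to the tail sums $\sum_{n\ge k}n|a_{n}|$. Once that is handled, the conclusion $[d(bv)]^{\alpha}=\int\ell_{1}$ follows immediately.
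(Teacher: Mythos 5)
Your proposal is correct and follows essentially the same route as the paper: the paper likewise defines $D=(d_{nk})$ via $a_{n}x_{n}=\sum_{k=2}^{n}n\,a_{n}\,y_{k}=(Dy)_{n}$ using the relation (\ref{transform2}) and invokes Lemma \ref{duallem3}(iii). Your write-up is in fact more explicit than the paper's one-line proof about why the column-sum condition reduces to $\sum_{n}n|a_{n}|<\infty$.
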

\begin{proof}
As similar way in proof of Theorem \ref{alphathm}, if we take the matrix $\Sigma$ as defined by (\ref{matr2}) and define the matrix $D=(d_{nk})$ with $a_{n}x_{n}=\sum_{k=2}^{n}n.a_{n}.y_{k}=(Dy)_{n} $ for all $n\in \mathbb{N}$, using by the relation (\ref{transform2}), this gives us that $[d(bv)]^{\alpha}=\int \ell_{1}$.
\end{proof}
\begin{thm}\label{betathm}
$\left[\int bv\right]^{\beta}=d(bs)$
\end{thm}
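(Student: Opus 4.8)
The plan is to follow exactly the scheme already used for the $\alpha$-dual and reduce the statement to a matrix-class characterization via Lemma \ref{duallem2}. First I would record the inverse of the triangle $\Gamma$. Solving the relation (\ref{transform1}) by telescoping gives $kx_{k}=\sum_{j=1}^{k}y_{j}$, hence $x_{k}=\frac{1}{k}\sum_{j=1}^{k}y_{j}$ and $(\Gamma^{-1})_{kj}=1/k$ for $1\leq j\leq k$ and $0$ otherwise. Thus $\int bv=(\ell_{1})_{\Gamma}$, and for every $x\in\int bv$ the sequence $y=\Gamma x$ lies in $\ell_{1}$.

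Next, for a fixed $a=(a_{k})\in\omega$ I would substitute this expression for $x_{k}$ into the partial sum $\sum_{k=1}^{n}a_{k}x_{k}$ and interchange the two finite summations over the triangular region $1\leq j\leq k\leq n$, obtaining
\begin{eqnarray*}
\sum_{k=1}^{n}a_{k}x_{k}=\sum_{k=1}^{n}\frac{a_{k}}{k}\sum_{j=1}^{k}y_{j}=\sum_{j=1}^{n}\Bigl(\sum_{k=j}^{n}\frac{a_{k}}{k}\Bigr)y_{j}=(By)_{n},
\end{eqnarray*}
where $B=(b_{nj})$ with $b_{nj}=\sum_{k=j}^{n}a_{k}/k$ is precisely the matrix of Lemma \ref{duallem2} assembled from $a$ and the entries of $\Gamma^{-1}$. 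Consequently $ax=(a_{k}x_{k})\in cs$ for every $x\in\int bv$ if and only if $(By)_{n}$ converges for every $y\in\ell_{1}$, i.e. $B\in(\ell_{1}:c)$. By Lemma \ref{duallem2} with $\lambda=\ell_{1}$ and $U=\Gamma$ this gives $a\in[\int bv]^{\beta}\iff B\in(\ell_{1}:c)$.

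Finally I would feed $B$ into Lemma \ref{duallem3}(ii). Writing $S_{n}=\sum_{k=1}^{n}a_{k}/k$ one has $b_{nj}=S_{n}-S_{j-1}$, so the boundedness condition (\ref{deq1}), $\sup_{n,j}|b_{nj}|<\infty$, is equivalent to $\sup_{n}|S_{n}|<\infty$, that is to $(k^{-1}a_{k})\in bs$, which is exactly $a\in d(bs)$. This is the step I expect to carry the theorem. The delicate point, and the one I would scrutinise most, is the second condition (\ref{deq2}) of the $(\ell_{1}:c)$ characterization: the existence of $\lim_{n}b_{nj}$ is the convergence of the tail series $\sum_{k\geq j}a_{k}/k$, which a priori is strictly stronger than mere boundedness of $(S_{n})$. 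One must therefore verify that, on the class already cut out by (\ref{deq1}), this limit condition imposes no further restriction; otherwise it is the boundedness alone, i.e. the $(\ell_{1}:\ell_{\infty})$ version of Lemma \ref{duallem2}, that yields $d(bs)$. Reconciling these two conditions with the claimed equality $[\int bv]^{\beta}=d(bs)$ is the main obstacle of the argument.
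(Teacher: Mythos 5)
Your reduction is exactly the paper's: the same substitution $x_{k}=k^{-1}\sum_{j=1}^{k}y_{j}$, the same triangular matrix $b_{nj}=\sum_{k=j}^{n}a_{k}/k$ (the paper's $E$ in (\ref{beta2})), and the same appeal to Lemma \ref{duallem3}(ii). The difference is that you stopped at the right place: the ``delicate point'' you flag is not an oversight in your argument but a genuine defect of the statement. Condition (\ref{deq2}) applied to $b_{nj}$ demands that $\lim_{n}\sum_{k=j}^{n}a_{k}/k$ exist for every $j$, i.e.\ that the series $\sum_{k}a_{k}/k$ converge, i.e.\ $a\in d(cs)$. Since convergence of a series implies boundedness of its partial sums, (\ref{deq2}) already implies (\ref{deq1}) here, so the two conditions together characterize $d(cs)$, not $d(bs)$. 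The inclusion $d(cs)\subset d(bs)$ is strict, and the larger set really is too large: for $a_{k}=(-1)^{k}k$ one has $(a_{k}/k)=((-1)^{k})\in bs$, so $a\in d(bs)$, yet for $x=(1/k)\in\int bv$ (its $\Gamma$-transform is $e^{(1)}\in\ell_{1}$) the series $\sum_{k}a_{k}x_{k}=\sum_{k}(-1)^{k}$ diverges, so $a\notin\left[\int bv\right]^{\beta}$.

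So there is nothing to ``reconcile'': the correct conclusion of your own computation is $\left[\int bv\right]^{\beta}=d(cs)$, which is also what the Goes--Goes duality quoted in Lemma \ref{betabv} (namely $[d(cs)]^{\beta}=\int bv$) predicts. The paper's proof at this point asserts that (\ref{deq1}) and (\ref{deq2}) yield ``$(a_{k})\in cs$'' and ``$(a_{k})\in d(bs)$'' respectively and then invokes an inclusion $d(bs)\subset cs$; neither the attribution of the conditions nor that inclusion is right, and the boundedness condition alone characterizes only the $\gamma$-dual. Keep (\ref{deq2}) as the binding condition and your argument closes into a complete and correct proof of $\left[\int bv\right]^{\beta}=d(cs)$, in place of the stated $d(bs)$.
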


\begin{proof}
Consider the equation
\begin{eqnarray}\label{beta1}
\sum_{k=1}^{n}a_{k}x_{k}=\sum_{k=1}^{n}a_{k}\left(k^{-1}\sum_{j=1}^{k}y_{j}\right)=\sum_{k=1}^{n}\left(\sum_{j=k}^{n}\frac{a_{j}}{j}\right)y_{k}=(Ey)_{n}
\end{eqnarray}
where $E=(e_{nk})$ is defined by
\begin{eqnarray}\label{beta2}
e_{nk}= \left\{ \begin{array}{ccl}
\sum_{j=k}^{n}j^{-1}a_{j}&, & \quad (0\leq k \leq n)\\
0&, & \quad (k>n)
\end{array} \right.
\end{eqnarray}
for all $n,k\in \mathbb{N}$. Then we deduce from Lemma \ref{duallem3} (ii) with (\ref{beta1}) that $ax=(a_{k}x_{k})\in cs$ whenever $x=(x_{k})\in \int bv$ if and only if $Ey\in c$ whenever $y=(y_{k})\in \ell_{1}$. Thus, $(a_{k})\in cs$ and $(a_{k})\in d(bs)$ by (\ref{deq1}) and (\ref{deq2}), respectively. Since the inclusion $d(bs)\subset cs$ holds, then, we have $(a_{k})\in d(bs)$, whence $\left[\int bv\right]^{\beta}=d(bs)$.
\end{proof}

\begin{lem}\cite{Goes}\label{betabv}
$(cs)^{\beta}= bv \Rightarrow [d(cs)]^{\beta}=\int bv$
\end{lem}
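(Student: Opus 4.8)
The plan is to deduce the statement from a single general change-of-variable identity, namely $[d(X)]^{\beta}=\int(X^{\beta})$, valid for an arbitrary sequence space $X$, and then to specialise to $X=cs$ and invoke the hypothesis $(cs)^{\beta}=bv$. So the real content is to establish this duality identity, after which the lemma is immediate.

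To prove the identity I would exploit the bijection between $d(X)$ and $X$ furnished by the substitution $x_{k}=k\,u_{k}$, i.e. $u_{k}=k^{-1}x_{k}$. By the very definition $d(X)=\{x:(k^{-1}x_{k})\in X\}$, the assignment $x\mapsto u=(k^{-1}x_{k})$ is a bijection from $d(X)$ onto $X$. Now fix $a=(a_{k})\in\omega$ and set $b_{k}=k\,a_{k}$. For every $x\in d(X)$ with associated $u\in X$ one has, termwise,
\begin{eqnarray*}
a_{k}x_{k}=a_{k}(k\,u_{k})=(k\,a_{k})\,u_{k}=b_{k}u_{k},
\end{eqnarray*}
so the sequences $ax$ and $bu$ literally coincide. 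In particular $ax\in cs$ if and only if $bu\in cs$, with no rearrangement of any series involved.

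The remaining step is purely logical. By definition $a\in[d(X)]^{\beta}$ means $ax\in cs$ for all $x\in d(X)$; by the displayed equality and the bijection this is equivalent to $bu\in cs$ for all $u\in X$, that is, to $b=(k\,a_{k})\in X^{\beta}$. Unwinding the definition of the integrated space, $(k\,a_{k})\in X^{\beta}$ is precisely $a\in\int(X^{\beta})$. Hence $[d(X)]^{\beta}=\int(X^{\beta})$. Applying this with $X=cs$ and using $(cs)^{\beta}=bv$ gives
\begin{eqnarray*}
[d(cs)]^{\beta}=\int\!\big((cs)^{\beta}\big)=\int bv,
\end{eqnarray*}
which is the assertion.

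I expect the only delicate point to be the low-index behaviour of the factor $k^{-1}$: in this paper the sums begin at $k=1$ and the $\Sigma$-transform is shifted to start at $k=2$, so I would verify that the substitution $u_{k}=k^{-1}x_{k}$ is applied on the correct range and that no boundary term is dropped in passing between $d(cs)$ and $cs$. Apart from this bookkeeping the argument is a direct term-by-term relabelling, so I anticipate no analytic obstacle beyond confirming that the bijection and the hypothesis $(cs)^{\beta}=bv$ line up on exactly the same index set.
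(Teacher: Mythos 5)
Your argument is correct. Note that the paper itself offers no proof of this lemma at all: it is stated as a citation to Goes and Goes, so there is nothing internal to compare against. What you supply is a clean, self-contained derivation via the general change-of-variable identity $[d(X)]^{\beta}=\int(X^{\beta})$, which is exactly the kind of argument the cited source relies on: the map $x\mapsto u=(k^{-1}x_{k})$ is a bijection of $d(X)$ onto $X$, the products satisfy $a_{k}x_{k}=(ka_{k})u_{k}$ termwise with no rearrangement, and the equivalence $a\in[d(X)]^{\beta}\Leftrightarrow (ka_{k})\in X^{\beta}\Leftrightarrow a\in\int(X^{\beta})$ follows by pure quantifier bookkeeping. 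Your worry about low-index behaviour is harmless here: since the paper's summations start at $k=1$, the factor $k^{-1}$ is defined on the whole index set, and the shifted $\Sigma$-transform (which starts at $k=2$) enters only in the treatment of $d(bv)$ as a matrix domain, not in the definition of $d(cs)$ or in the multiplier computation. The one thing worth making explicit if this were written out in full is that your identity actually proves more than the lemma asks: it reduces the implication to the single classical fact $(cs)^{\beta}=bv$, which is taken as the hypothesis, so no analytic input beyond Abel summation (hidden in that hypothesis) is needed.
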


From  Theorem \ref{betathm} and Lemma \ref{betabv}, we have,

\begin{thm}
$(bv)^{\beta}=cs \Rightarrow [d(bv)]^{\beta}=\int cs$.
\end{thm}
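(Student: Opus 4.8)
The plan is to isolate the general duality principle $[d(X)]^{\beta}=\int(X^{\beta})$ for the diagonal-type operations $d(\cdot)$ and $\int(\cdot)$, and then to apply it with $X=bv$ in combination with the hypothesis $(bv)^{\beta}=cs$. This is exactly the mechanism already used to pass from $(cs)^{\beta}=bv$ to $[d(cs)]^{\beta}=\int bv$ in Lemma \ref{betabv}, so the present statement is the same argument with the roles of $bv$ and $cs$ interchanged. First I would fix $a=(a_{k})\in\omega$ and unfold the definition of the $\beta$-dual: $a\in[d(bv)]^{\beta}$ means precisely that $ax=(a_{k}x_{k})\in cs$ for every $x\in d(bv)$.

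Next I would exploit the defining correspondence of the differentiated space. Since $x\in d(bv)$ is by definition equivalent to $(k^{-1}x_{k})\in bv$, the substitution $u_{k}=k^{-1}x_{k}$ (equivalently $x_{k}=ku_{k}$) is a bijection of $d(bv)$ onto $bv$. Under this substitution one has $a_{k}x_{k}=(ka_{k})u_{k}$, so the requirement ``$(a_{k}x_{k})\in cs$ for all $x\in d(bv)$'' transforms into ``$((ka_{k})u_{k})\in cs$ for all $u\in bv$,'' which is exactly the assertion $(ka_{k})\in(bv)^{\beta}$.

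Then I would invoke the hypothesis $(bv)^{\beta}=cs$ to rewrite this last condition as $(ka_{k})\in cs$. By the defining relation $\int cs=\{a=(a_{k})\in\omega:(ka_{k})\in cs\}$, the statement $(ka_{k})\in cs$ is nothing but $a\in\int cs$. Chaining the equivalences yields $a\in[d(bv)]^{\beta}\iff a\in\int cs$, and hence $[d(bv)]^{\beta}=\int cs$, as claimed.

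The argument is almost entirely formal, being just diagonal scaling by $k$ and $k^{-1}$, so the only genuine care needed lies in the index bookkeeping at the endpoints: the role of $k=1$ in the definition of $d(bv)$ (where $(k-1)^{-1}$ is singular) and the shift built into the $\Sigma$-transform (\ref{transform2}) must be handled so that $u_{k}=k^{-1}x_{k}$ really does carry $d(bv)$ \emph{bijectively} onto $bv$ and so that no low-index boundary term is gained or lost when moving between $(a_{k}x_{k})\in cs$ and $(ka_{k})\in(bv)^{\beta}$. I expect this verification of the bijection at the low-index end, and its compatibility with $cs$-summability, to be the main (if minor) obstacle; the remainder reproduces verbatim the scaling mechanism behind Lemma \ref{betabv}.
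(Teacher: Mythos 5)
Your argument is correct, and it is essentially the paper's own route: the paper gives no written proof at all, merely asserting the result ``from Theorem \ref{betathm} and Lemma \ref{betabv},'' i.e.\ by the same diagonal-scaling analogy with the Goes--Goes lemma that you make explicit via the bijection $x\mapsto(k^{-1}x_{k})$ of $d(bv)$ onto $bv$ and the identity $a_{k}x_{k}=(ka_{k})u_{k}$. Your closing caution about the low-index bookkeeping ($k=1$ and the shift in (\ref{transform2})) is the only real point of care, and it does not affect the conclusion.
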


\begin{thm}
$\left[\int bv\right]^{\gamma}=d(bs)$
\end{thm}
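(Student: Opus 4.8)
The plan is to follow the same route as the proof of Theorem~\ref{betathm}, with the role of $cs$ replaced by $bs$ and that of $c$ replaced by $\ell_\infty$. Using the isomorphism $T$ of Theorem~\ref{isomorph}, every $x=(x_k)\in\int bv$ can be written as $x_k=k^{-1}\sum_{j=1}^{k}y_j$ with $y=Tx\in\ell_1$. Substituting this into the partial sum $\sum_{k=1}^{n}a_k x_k$ and interchanging the order of summation reproduces equation~(\ref{beta1}) verbatim, namely $\sum_{k=1}^{n}a_k x_k=(Ey)_n$, where $E=(e_{nk})$ is the same triangle-type matrix of~(\ref{beta2}) with $e_{nk}=\sum_{j=k}^{n}j^{-1}a_j$ for $0\le k\le n$ and $e_{nk}=0$ for $k>n$.

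Next I would translate membership in the $\gamma$-dual into a matrix condition. By definition $a\in\left[\int bv\right]^\gamma$ means $ax\in bs$ for every $x\in\int bv$, i.e. the partial sums $\left((Ey)_n\right)_{n\in\mathbb{N}}$ form a bounded sequence for every $y\in\ell_1$. This is exactly $Ey\in\ell_\infty$ for all $y\in\ell_1$, that is, $E\in(\ell_1:\ell_\infty)$. Applying Lemma~\ref{duallem3}(i), this is equivalent to $\sup_{n,k}|e_{nk}|<\infty$, i.e.
\[
\sup_{n,k\in\mathbb{N}}\left|\sum_{j=k}^{n}j^{-1}a_j\right|<\infty.
\]

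The one step carrying real content is to recognise this boundedness condition as $a\in d(bs)$, and I expect it to be the main obstacle only in that it needs a short argument rather than being automatic. Put $b_j=j^{-1}a_j$ and $S_n=\sum_{j=1}^{n}b_j$, so that $a\in d(bs)$ means $(b_j)\in bs$, i.e. $\sup_n|S_n|<\infty$. Taking $k=1$ in the displayed supremum gives $\sup_n|S_n|<\infty$ immediately, while conversely the segment telescopes, $\sum_{j=k}^{n}j^{-1}a_j=S_n-S_{k-1}$, whence $\left|\sum_{j=k}^{n}j^{-1}a_j\right|\le|S_n|+|S_{k-1}|\le 2\sup_m|S_m|$ uniformly in $n,k$. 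The two conditions are therefore equivalent and $\left[\int bv\right]^\gamma=d(bs)$. I would close by noting that this matches the value of $\left[\int bv\right]^\beta$ from Theorem~\ref{betathm}: passing from the $\beta$- to the $\gamma$-dual merely discards the convergence requirement~(\ref{deq2}) and keeps the boundedness requirement~(\ref{deq1}), which is precisely the condition already pinning down $d(bs)$.
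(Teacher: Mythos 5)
Your proposal is correct and follows exactly the route the paper indicates: the paper's own proof is a one-line remark that the result follows by analogy with Theorem~\ref{betathm}, using Lemma~\ref{duallem3}(i) in place of Lemma~\ref{duallem3}(ii), which is precisely your argument. Your telescoping verification that $\sup_{n,k}\bigl|\sum_{j=k}^{n}j^{-1}a_j\bigr|<\infty$ is equivalent to $(j^{-1}a_j)\in bs$ supplies a detail the paper omits, and it is sound.
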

\begin{proof}
This can be obtained by analogy with the proof of Theorem \ref{betathm} with Lemma \ref{duallem3} (i) instead of Lemma \ref{duallem3} (ii). So we omit the details.
\end{proof}

\begin{thm}
$[d(bs)]^{\gamma}=\int bv$
\end{thm}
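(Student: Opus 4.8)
The plan is to avoid a direct matrix computation, which is awkward here because the domain space $d(bs)$ is \emph{not} isomorphic to $\ell_{1}$ (unlike $\int bv$ and $d(bv)$, so Lemma \ref{duallem3} does not apply in the same clean way), and instead to exploit the elementary way in which the operators $\int$ and $d$ interact with multiplier spaces. The key identity I would record first is
\[
[d(X)]^{\gamma}=\int\big(X^{\gamma}\big),
\]
valid for any sequence space $X$. To prove it, observe that $a\in[d(X)]^{\gamma}=M(d(X),bs)$ means $ax\in bs$ for every $x\in d(X)$. Since $x\in d(X)$ holds exactly when $(k^{-1}x_{k})\in X$, I would substitute $x_{k}=k\,u_{k}$ with $u=(u_{k})$ ranging over $X$; then $(a_{k}x_{k})=(k a_{k})\,u_{k}$, so the requirement becomes $(ka_{k})\,u\in bs$ for all $u\in X$, i.e. $(ka_{k})\in X^{\gamma}$, i.e. $a\in\int(X^{\gamma})$ by the definition of the integrated space. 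A word of care is needed at the boundary index, since the sums in the paper run from $k=1$ and $k^{-1}$ is undefined at $k=0$; this affects only a single term and not the conclusion.

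Specializing the identity to $X=bs$ gives $[d(bs)]^{\gamma}=\int(bs^{\gamma})$, so the whole statement reduces to the single scalar fact $bs^{\gamma}=bv$. I would establish this by Abel summation. Writing $X_{k}=\sum_{j=1}^{k}x_{j}$, which is bounded precisely when $x\in bs$, partial summation yields $\sum_{k=1}^{n}a_{k}x_{k}=a_{n}X_{n}+\sum_{k=1}^{n-1}(a_{k}-a_{k+1})X_{k}$. The inclusion $bv\subseteq bs^{\gamma}$ is then immediate: if $a\in bv$ then $a$ is bounded and $\sum_{k}|a_{k}-a_{k+1}|<\infty$, so both terms stay bounded uniformly in $n$ and $ax\in bs$. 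The reverse inclusion $bs^{\gamma}\subseteq bv$ is the one requiring effort: from $a\in bs^{\gamma}$ one must produce $a\in bv$ by testing against suitably chosen $x\in bs$ whose partial sums $X_{k}$ realize the signs $\operatorname{sgn}(a_{k}-a_{k+1})$, forcing $\sum|a_{k}-a_{k+1}|<\infty$. Combining $[d(bs)]^{\gamma}=\int(bs^{\gamma})$ with $bs^{\gamma}=bv$ gives $[d(bs)]^{\gamma}=\int bv$, which is exactly the desired conclusion; it also fits the pattern of the preceding theorem $[\int bv]^{\gamma}=d(bs)$, exhibiting $\int bv$ and $d(bs)$ as a symmetric $\gamma$-dual pair.

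The main obstacle is precisely the converse inclusion $bs^{\gamma}\subseteq bv$, as the construction of the test sequences and the uniform control of the partial-summation remainder are where the real content sits; this is a classical fact about $bs$ and may alternatively be quoted from the literature, in the same spirit as the paper invokes Goes for $(cs)^{\beta}=bv$. A secondary point to handle with some care is the index bookkeeping in the multiplier identity (the undefined term at $k=0$ and the special treatment of $k=1$ already built into the $\Sigma$- and $\Gamma$-transforms), but once the substitution $x_{k}=k u_{k}$ is justified for $k\geq 1$ the argument is routine.
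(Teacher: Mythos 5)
The paper states this theorem with no proof at all, so there is nothing to compare your argument against; the nearest precedent in the text is the unproved transfer statement of Lemma \ref{betabv} (``$(cs)^{\beta}=bv \Rightarrow [d(cs)]^{\beta}=\int bv$'', quoted from Goes--Goes), and your argument is exactly the reasoning that makes such transfers legitimate. Your two reductions are both sound: the multiplier identity $[d(X)]^{\gamma}=\int(X^{\gamma})$ follows rigorously from the bijective substitution $x_{k}=ku_{k}$, since $a_{k}x_{k}=(ka_{k})u_{k}$ and $u$ ranges over all of $X$ as $x$ ranges over $d(X)$; and $\int(bv)$ in this sense coincides with the paper's difference-based definition of $\int bv$. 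You are also right that the matrix-domain machinery used for $[\int bv]^{\gamma}=d(bs)$ does not transfer, because $d(bs)$ is isomorphic to $bs$ (hence to $\ell_{\infty}$), not to $\ell_{1}$, so Lemma \ref{duallem3} is unavailable; your route is the natural substitute. The one point where your sketch of the remaining fact $bs^{\gamma}\subseteq bv$ is too optimistic as written: testing against a finitely supported $x$ whose partial sums realize $\operatorname{sgn}(a_{k}-a_{k+1})$ proves nothing, because for each \emph{fixed} such $x$ the partial sums of $ax$ are eventually constant and hence automatically bounded --- the $\gamma$-dual condition only demands boundedness for each fixed $x$, not uniformly over the tests. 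To close this you need either a uniform boundedness argument (the functionals $f_{n}(x)=\sum_{k=1}^{n}a_{k}x_{k}$ are pointwise bounded on the Banach space $bs$, hence $\sup_{n}\|f_{n}\|=\sup_{n}\bigl(|a_{n}|+\sum_{k=1}^{n-1}|a_{k}-a_{k+1}|\bigr)<\infty$), or a gliding-hump construction of a single infinitely supported $x\in bs$, or simply the known characterization of $(bs:\ell_{\infty})$ in the spirit of Lemma \ref{lemmtr2}. With that step supplied or cited, your proof is complete and, unlike the paper, actually establishes the claim.
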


\section{Matrix Mappings on the spaces $\int bv$ and $d(bv)$}

In this section, we characterize some matrix transformations on the spaces $\int bv$ and $d(bv)$.\\

We shall write throughout for brevity that
\begin{eqnarray*}
&&\overline{a}_{nk}=k^{-1}\sum_{j=k}^{\infty}a_{nj}, \quad\quad \widetilde{a}_{nk}=k\sum_{j=k}^{\infty}a_{nj},\\
&&\widehat{a}_{nk}=n.a_{nk}-(n-1).a_{n-1,k}, \quad\quad \overrightarrow{a}_{nk}=n^{-1}.a_{nk}-(n-1)^{-1}.a_{n-1,k}
\end{eqnarray*}
for all $k,n \in \mathbb{N}$.

\begin{lem}\label{lemma1}\cite{AB}
Let $X, Y$ be any two sequence spaces, $A$ be an infinite matrix and $U$ a triangle matrix matrix.Then, $A\in (X: Y_{U})$ if and only if $UA\in (X:Y)$.
\end{lem}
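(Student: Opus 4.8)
The plan is to reduce the whole equivalence to the single associativity relation $U(Ax)=(UA)x$, which is the only place where the triangle hypothesis on $U$ is genuinely used. Recall that $Y_{U}=\{z\in\omega:Uz\in Y\}$, so by the definition of the matrix classes recorded after (\ref{equ1}), the assertion $A\in(X:Y_{U})$ means that for each $x\in X$ the transform $Ax$ exists (each series $(Ax)_{n}=\sum_{k}a_{nk}x_{k}$ converges) and satisfies $U(Ax)\in Y$; likewise $UA\in(X:Y)$ means that $(UA)x$ exists and lies in $Y$ for every $x\in X$.

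First I would establish the associativity lemma: for any matrix $M=(m_{jk})$ and any $x$ for which $Mx$ is defined, one has $U(Mx)=(UM)x$. This is where triangularity enters. Because $U=(u_{nk})$ is a triangle, its $n$th row is finitely supported ($u_{nk}=0$ for $k>n$), so
\begin{eqnarray*}
[U(Mx)]_{n}=\sum_{j=1}^{n}u_{nj}(Mx)_{j}=\sum_{j=1}^{n}u_{nj}\sum_{k}m_{jk}x_{k}=\sum_{k}\left(\sum_{j=1}^{n}u_{nj}m_{jk}\right)x_{k}=[(UM)x]_{n}.
\end{eqnarray*}
Since the outer sum over $j$ is finite and each inner series converges, the interchange of the order of summation carries no convergence obstruction. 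It is exactly this finiteness of the rows of $U$ that legitimizes the interchange; for a general (non-triangle) $U$ the identity would require additional absolute-convergence hypotheses, so this step is the main---and essentially the only---subtle point of the proof.

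For the forward implication, I would assume $A\in(X:Y_{U})$ and fix $x\in X$. Then $Ax$ is defined and $U(Ax)\in Y$; applying the associativity lemma with $M=A$ shows that $(UA)x$ exists and equals $U(Ax)\in Y$. As $x\in X$ was arbitrary, $UA\in(X:Y)$.

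For the converse I would exploit that a triangle $U$ possesses a unique triangle inverse $U^{-1}$ with $U^{-1}U=I$. Assuming $UA\in(X:Y)$, fix $x\in X$, so $(UA)x$ exists and belongs to $Y$. Applying the associativity lemma to the triangle $U^{-1}$ and the matrix $UA$ gives $U^{-1}\big((UA)x\big)=\big(U^{-1}(UA)\big)x=Ax$, the identity $U^{-1}(UA)=(U^{-1}U)A=A$ again being a finite-row manipulation; in particular $Ax$ is defined. Then, once more by the associativity relation, $U(Ax)=(UA)x\in Y$, that is, $Ax\in Y_{U}$. Hence $A\in(X:Y_{U})$, which completes the equivalence.
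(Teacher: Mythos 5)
The paper does not prove this lemma at all; it is quoted from Altay and Ba\c{s}ar \cite{AB} as a known result. Your argument is a correct, self-contained proof of the standard statement: the associativity identity $U(Mx)=(UM)x$ is indeed justified by the finite support of the rows of a triangle (a finite linear combination of convergent series may be resummed termwise), and you correctly handle the one genuinely delicate point of the converse --- namely that the existence of $(UA)x$ does not a priori give the existence of $Ax$ --- by passing through the triangle inverse $U^{-1}$ and the identity $U^{-1}(UA)=A$. This matches the argument in the cited source, so there is nothing further to reconcile with the paper.
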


\begin{thm}
Suppose that the entries of the infinite matrices $A=(a_{nk})$ and $F=(f_{nk})$ are connected with the relation
\begin{eqnarray}\label{mtrtrf1}
f_{nk}=\overline{a}_{nk}
\end{eqnarray}
for all $k,n\in \mathbb{N}$ and $Y$ be any given sequence space. Then, $A\in (\int bv:Y)$ if and only if $\{a_{nk}\}_{k\in \mathbb{N}}\in [\int bv]^{\beta}$ for all $n\in \mathbb{N}$ and $F\in (\ell_{1}:Y)$.
\end{thm}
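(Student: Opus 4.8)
The plan is to exploit the norm isomorphism $T\colon \int bv \to \ell_{1}$ constructed in Theorem \ref{isomorph}, under which $y = Tx = \Gamma x$ and the inverse is recovered by $x_{k} = k^{-1}\sum_{j=1}^{k} y_{j}$. The relation (\ref{mtrtrf1}) between $A$ and $F$ is precisely what encodes $F = A\Gamma^{-1}$, so that $A$ acting on $x \in \int bv$ should coincide with $F$ acting on the corresponding $y = Tx \in \ell_{1}$. Once the identity $Ax = Fy$ is established, the stated equivalence will follow from the bijectivity of $T$.

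First I would dispose of the convergence requirement. For $Ax$ to be defined at all for every $x \in \int bv$, the series $\sum_{k} a_{nk} x_{k}$ must converge for each $n$ and each $x \in \int bv$; by the definition of the $\beta$-dual this says exactly that $\{a_{nk}\}_{k} \in [\int bv]^{\beta}$ for every $n$. Thus this condition is forced in the ``only if'' direction and is available as a hypothesis in the ``if'' direction, and in either case it guarantees that each row of $A$ is a well-defined functional on $\int bv$.

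The central step is the computation $(Ax)_{n} = (Fy)_{n}$. Substituting $x_{k} = k^{-1}\sum_{j=1}^{k} y_{j}$ into the partial sums $\sum_{k=1}^{m} a_{nk} x_{k}$ and interchanging the two finite summations gives an expression of the form $\sum_{k=1}^{m}\left(\sum_{j=k}^{m} \ldots\right) y_{k}$; letting $m \to \infty$ and using the $\beta$-dual membership together with $y \in \ell_{1}$ to pass to the limit inside, the coefficient of $y_{k}$ becomes $f_{nk} = \overline{a}_{nk}$ as in (\ref{mtrtrf1}). I expect this interchange-and-limit to be the main obstacle: one must justify that the rearranged series converges and that the limit of the transformed inner sums is the claimed $f_{nk}$, which is where the membership $\{a_{nk}\}_{k} \in [\int bv]^{\beta}$ is genuinely used rather than merely assumed.

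With the identity $Ax = Fy$ in hand, the rest is immediate. Since $T$ is a bijection of $\int bv$ onto $\ell_{1}$, the set $\{Ax : x \in \int bv\}$ equals $\{Fy : y \in \ell_{1}\}$. Hence $Ax \in Y$ for every $x \in \int bv$ if and only if $Fy \in Y$ for every $y \in \ell_{1}$; that is, $A \in (\int bv : Y)$ if and only if $F \in (\ell_{1} : Y)$, under the standing hypothesis that the rows of $A$ lie in $[\int bv]^{\beta}$. For the ``only if'' direction I take $x = T^{-1}y$ for an arbitrary $y \in \ell_{1}$, and for the ``if'' direction I take $y = Tx$ for arbitrary $x \in \int bv$; this closes both implications.
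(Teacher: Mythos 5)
Your proposal follows essentially the same route as the paper's own proof: both exploit the isomorphism $x\mapsto y=\Gamma x$ with inverse $x_{k}=k^{-1}\sum_{j=1}^{k}y_{j}$, identify the row condition $\{a_{nk}\}_{k}\in[\int bv]^{\beta}$ with the existence of $Ax$, and derive the key identity $Ax=Fy$ by interchanging the two finite summations and letting $m\to\infty$, after which the equivalence is immediate from bijectivity. The only point worth flagging (an issue present in the paper itself rather than in your argument) is that the interchange actually yields the coefficient $\sum_{j=k}^{m}j^{-1}a_{nj}$ for $y_{k}$ rather than $k^{-1}\sum_{j=k}^{m}a_{nj}$, so the definition of $\overline{a}_{nk}$ must be read in that corrected form for (\ref{mtrtrf1}) to match the computation.
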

\begin{proof}

Let $Y$ be any given sequence space. Suppose that (\ref{mtrtrf1}) holds between $A=(a_{nk})$ and $F=(f_{nk})$, and take into account that the spaces $\int bv$ and $\ell_{1}$ are norm isomorphic.\\

Let $A\in (\int bv:Y)$ and take any $y=(y_{k})\in \ell_{1}$. Then $\Gamma F$ exists and $\{a_{nk}\}_{k\in\mathbb{N}}\in \{\int bv\}^{\beta}$ which yields that (\ref{mtrtrf1}) is necessary and $\{f_{nk}\}_{k\in\mathbb{N}}\in (\ell_{1})^{\beta}$ for each $n\in \mathbb{N}$. Hence, $Fy$ exists for each $y\in \ell_{1}$ and thus \begin{eqnarray*}
\sum_{k}f_{nk}y_{k}=\sum_{k}a_{nk}x_{k}~ \textrm{ for all }~ n\in\mathbb{N},
\end{eqnarray*}
we obtain that $Fy=Ax$ which leads us to the consequence $F\in (\ell_{1}:Y)$.\\

Conversely, let $\{a_{nk}\}_{k\in\mathbb{N}}\in \{\int bv\}^{\beta}$ for each $n\in \mathbb{N}$ and $F\in (\ell_{1}:Y)$ hold, and take any $x=(x_{k})\in \int bv$. Then, $Ax$ exists. Therefore, we obtain from the equality
\begin{eqnarray*}
\sum_{k=1}^{m}a_{nk}x_{k}=\sum_{k=1}^{m}\left[k^{-1}\sum_{j=k}^{m}a_{nj}\right]y_{k}   ~ \textrm{ for all }~m,n\in \mathbb{N}
\end{eqnarray*}
as $m\rightarrow\infty$ that $Ax=Fy$ and this shows that $F\in(\ell_{1}:Y)$. This completes the proof.

\end{proof}

\begin{thm}
Suppose that the entries of the infinite matrices $A=(a_{nk})$ and $G=(g_{nk})$ are connected with the relation
\begin{eqnarray}\label{mtrtrf2}
g_{nk}=\widetilde{a}_{nk}
\end{eqnarray}
for all $k,n\in \mathbb{N}$ and $Y$ be any given sequence space. Then, $A\in (d(bv):Y)$ if and only if $\{a_{nk}\}_{k\in \mathbb{N}}\in [d(bv)]^{\beta}$ for all $n\in \mathbb{N}$ and $G\in (\ell_{1}:Y)$.
\end{thm}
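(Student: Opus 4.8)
The plan is to transfer the whole problem to $\ell_1$ by means of the norm isomorphism $S:d(bv)\to\ell_1$ established in Theorem \ref{isomorph}, whose inverse assigns to each $y=(y_k)\in\ell_1$ the sequence $x=(x_k)\in d(bv)$ given by $x_k=k\sum_{j=2}^{k}y_j$. Under this correspondence I would show that the two systems $Ax$ and $Gy$ coincide whenever $y=Sx$, so that, once the row condition $\{a_{nk}\}_{k}\in[d(bv)]^{\beta}$ is in force for every $n$, the membership $A\in(d(bv):Y)$ becomes equivalent to $G\in(\ell_{1}:Y)$. The relation (\ref{mtrtrf2}) is exactly the recipe that turns the $n$th row of $G$ into the coefficients of $y$ in the rearranged series for $(Ax)_n$, so the argument runs in complete parallel with the preceding $\int bv$ theorem.

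For the forward implication I would assume $A\in(d(bv):Y)$ and take any $y\in\ell_1$; the inverse formula produces a corresponding $x\in d(bv)$, and since $Ax$ exists for this $x$ the row $\{a_{nk}\}_{k}$ must belong to $[d(bv)]^{\beta}$ for each $n$, which gives the necessity of the dual condition. Inserting $x_k=k\sum_{j=2}^{k}y_j$ into $(Ax)_n$ and invoking (\ref{mtrtrf2}) yields $\sum_k g_{nk}y_k=\sum_k a_{nk}x_k=(Ax)_n$ for every $n$, so that $Gy=Ax\in Y$; as $y\in\ell_1$ was arbitrary, $G\in(\ell_1:Y)$ follows.

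For the converse I would assume $\{a_{nk}\}_{k}\in[d(bv)]^{\beta}$ for all $n$ together with $G\in(\ell_1:Y)$, and take any $x\in d(bv)$. The dual condition guarantees that $Ax$ exists. Starting from the finite identity
\begin{eqnarray*}
\sum_{k=2}^{m}a_{nk}x_{k}=\sum_{k=2}^{m}\left[k\sum_{j=k}^{m}a_{nj}\right]y_{k}~\textrm{ for all }~m,n\in\mathbb{N},
\end{eqnarray*}
obtained by substituting $x_k=k\sum_{j=2}^{k}y_j$ and interchanging the order of the double sum over $\{2\le j\le k\le m\}$, I would then let $m\to\infty$: the left-hand side converges by the beta-dual condition, while the coefficient of $y_k$ tends to $\widetilde{a}_{nk}=g_{nk}$, so that $Ax=Gy$. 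Since $y=Sx\in\ell_1$ and $G\in(\ell_1:Y)$, this places $Ax$ in $Y$, and hence $A\in(d(bv):Y)$.

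The main obstacle is the justification of the passage to the limit in that finite identity: one must verify that the rearranged partial sums $\sum_{k=2}^{m}\left[k\sum_{j=k}^{m}a_{nj}\right]y_k$ converge to $\sum_k g_{nk}y_k$ and simultaneously reproduce the convergent series $(Ax)_n$. This is precisely where membership of $\{a_{nk}\}_{k}$ in $[d(bv)]^{\beta}$ is needed, since it supplies the Abel-type convergence required to take the term-by-term limit and to identify the limiting coefficients with $\widetilde{a}_{nk}$; thus the interchange of summation with the limit, rather than any single algebraic manipulation, is the delicate point.
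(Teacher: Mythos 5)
Your overall strategy is exactly the one the paper intends: the paper proves only the $\int bv$ analogue of this statement and states the $d(bv)$ case without proof, and your transfer of the problem to $\ell_{1}$ via the isomorphism $S$ of Theorem \ref{isomorph}, with the necessity of the $\beta$-dual condition and the identification $Ax=Gy$ argued in the two directions, mirrors that model proof.

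There is, however, a concrete algebraic problem with the central identity. Substituting $x_{k}=k\sum_{j=2}^{k}y_{j}$ gives $\sum_{k=2}^{m}a_{nk}x_{k}=\sum_{k=2}^{m}\sum_{j=2}^{k}(k\,a_{nk})\,y_{j}$, and interchanging the order of summation over $\{2\le j\le k\le m\}$ yields $\sum_{k=2}^{m}\bigl(\sum_{j=k}^{m}j\,a_{nj}\bigr)y_{k}$: the weight $j$ stays attached to $a_{nj}$ inside the inner sum. This is not equal to $\sum_{k=2}^{m}\bigl(k\sum_{j=k}^{m}a_{nj}\bigr)y_{k}$ except for very special matrices; already for $m=3$, $k=2$ one gets $2a_{n2}+3a_{n3}$ versus $2(a_{n2}+a_{n3})$. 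Hence the limiting coefficient of $y_{k}$ is $\sum_{j=k}^{\infty}j\,a_{nj}$ rather than $\widetilde{a}_{nk}=k\sum_{j=k}^{\infty}a_{nj}$, and the identity $Ax=Gy$ on which both implications of your argument rest fails for $G$ as defined by (\ref{mtrtrf2}). The same slip is latent in the paper itself: its computation (\ref{beta1}) for the $\int bv$ case correctly produces the coefficient $\sum_{j=k}^{n}j^{-1}a_{j}$, while the Section 4 shorthand $\overline{a}_{nk}$, $\widetilde{a}_{nk}$ moves the weight outside the sum. Your proof becomes correct essentially verbatim once $g_{nk}$ is taken to be $\sum_{j=k}^{\infty}j\,a_{nj}$; with the definition as literally stated, the key step would fail. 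Your closing observation that the passage to the limit $m\to\infty$ is the delicate point, and that it is exactly where $\{a_{nk}\}_{k}\in[d(bv)]^{\beta}$ is used, is correct and worth keeping.
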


\begin{thm}
Suppose that the entries of the infinite matrices $A=(a_{nk})$ and $H=(h_{nk})$ are connected with the relation
\begin{eqnarray}\label{mtrtrf3}
h_{nk}=\widehat{a}_{nk}
\end{eqnarray}
for all $k,n\in \mathbb{N}$ and $Y$ be any given sequence space. Then, $A\in (Y:\int bv)$ if and only if $M\in (Y:\ell_{1})$.
\end{thm}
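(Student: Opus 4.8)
The plan is to read off from (\ref{domain}) that $\int bv = (\ell_{1})_{\Gamma}$ is the matrix domain of $\ell_{1}$ under the triangle $\Gamma$, and then to invoke Lemma \ref{lemma1}. The point to notice is that here, unlike in the two preceding theorems, it is the \emph{target} space rather than the domain that is a matrix domain; so the natural tool is not the norm isomorphism of Theorem \ref{isomorph} but Lemma \ref{lemma1}, which trades a matrix domain appearing in the codomain for a composition on the left by the triangle. Concretely, I would apply Lemma \ref{lemma1} with domain space $Y$, codomain space $\ell_{1}$, and triangle $U=\Gamma$, to obtain that $A\in (Y:\int bv)=(Y:(\ell_{1})_{\Gamma})$ holds if and only if $\Gamma A\in (Y:\ell_{1})$.

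It then remains to identify $\Gamma A$ with the matrix $H$ defined by (\ref{mtrtrf3}) (the statement writes $M$, which I read as a typo for $H$). This is a one-line computation from (\ref{matr1}): since the only nonzero entries in the $n$th row of $\Gamma$ are $\gamma_{nn}=n$ and $\gamma_{n,n-1}=-(n-1)$, we get
\begin{eqnarray*}
(\Gamma A)_{nk}=\sum_{j}\gamma_{nj}a_{jk}=n\,a_{nk}-(n-1)\,a_{n-1,k}=\widehat{a}_{nk}=h_{nk}
\end{eqnarray*}
for all $k,n\in\mathbb{N}$, i.e.\ $\Gamma A=H$. Substituting this into the equivalence above yields $A\in (Y:\int bv)$ if and only if $H\in (Y:\ell_{1})$, which is the assertion.

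I expect no genuine obstacle: once (\ref{domain}) and Lemma \ref{lemma1} are in hand, the argument collapses to the routine verification that $\Gamma A=H$. The only points deserving a word of care are that $\Gamma$ really is a triangle (its diagonal entries $\gamma_{nn}=n$ are nonzero, so it is invertible and Lemma \ref{lemma1} applies) and that the well-definedness of $Ax$ is not a separate issue, since membership in $(Y:\int bv)$ already presupposes $Ax\in\omega$ for each $x\in Y$. I would also remark that this theorem is the codomain-side counterpart of the two preceding ones, which treated $\int bv$ and $d(bv)$ as \emph{domains} via the column operations $\overline{a}_{nk}$ and $\widetilde{a}_{nk}$; the row operation $\widehat{a}_{nk}$ appearing here is precisely the left action of $\Gamma$.
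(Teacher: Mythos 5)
Your proposal is correct and matches the paper's own argument in substance: the paper likewise establishes $(Hz)_{n}=\{\Gamma(Az)\}_{n}$, i.e.\ $\Gamma A=H$, and concludes from $\int bv=(\ell_{1})_{\Gamma}$ that $Az\in\int bv$ iff $Hz\in\ell_{1}$, which is exactly the content of Lemma \ref{lemma1} that you invoke explicitly. Your reading of $M$ as a typo for $H$ is also the right one.
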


\begin{proof}
Let $z=(z_{k})\in Y$ and consider the following equality
\begin{eqnarray*}
\sum_{k=0}^{m}\widehat{a}_{nk}z_{k}=\sum_{k=0}^{m}(n.a_{nk}-(n-1)a_{n-1,k})z_{k} \quad ~\textrm{ for all, }~   m,n\in \mathbb{N}
\end{eqnarray*}
which yields that as $m\rightarrow \infty$ that  $(Hz)_{n}=\{\Gamma(Az)\}_{n}$ for all $n\in \mathbb{N}$. Therefore, one can observe from here that $Az\in \int bv$ whenever $z\in Y$ if and only if $Hz\in \ell_{1}$ whenever $z\in Y$.
\end{proof}

\begin{thm}
Suppose that the entries of the infinite matrices $A=(a_{nk})$ and $M=(m_{nk})$ are connected with the relation
\begin{eqnarray}\label{mtrtrf4}
m_{nk}=\overrightarrow{a}_{nk}
\end{eqnarray}
for all $k,n\in \mathbb{N}$ and $Y$ be any given sequence space. Then, $A\in (Y:d(bv))$ if and only if $F\in (Y:\ell_{1})$.
\end{thm}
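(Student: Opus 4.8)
The plan is to reduce the statement to a single application of Lemma \ref{lemma1}, exploiting the representation $d(bv)=(\ell_{1})_{\Sigma}$ recorded in (\ref{domain1}), where $\Sigma$ is the triangle (lower bidiagonal) matrix (\ref{matr2}). Since $\Sigma$ is a triangle, Lemma \ref{lemma1} applies verbatim with the triangle $U=\Sigma$ and the target $\ell_{1}$: for any sequence space $Y$ one has $A\in (Y:(\ell_{1})_{\Sigma})$ if and only if $\Sigma A\in (Y:\ell_{1})$. Thus the entire theorem comes down to identifying the product $\Sigma A$ with the matrix $M=(m_{nk})$ built from $A$ through the relation (\ref{mtrtrf4}) (this is the matrix the conclusion intends; the letter $F$ there should read $M$).

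The next step is to compute $\Sigma A$ entrywise. Because $\Sigma$ is lower bidiagonal with $\sigma_{nn}=1/n$ and $\sigma_{n,n-1}=-1/(n-1)$, each inner sum collapses to two surviving terms, giving
\begin{eqnarray*}
(\Sigma A)_{nk}=\sigma_{nn}a_{nk}+\sigma_{n,n-1}a_{n-1,k}=n^{-1}a_{nk}-(n-1)^{-1}a_{n-1,k}=\overrightarrow{a}_{nk}=m_{nk}
\end{eqnarray*}
for all $n,k\in\mathbb{N}$. Hence $M=\Sigma A$, and Lemma \ref{lemma1} immediately yields $A\in (Y:d(bv))$ if and only if $M\in (Y:\ell_{1})$, which is the assertion.

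An equivalent route, mirroring the preceding theorem, avoids invoking Lemma \ref{lemma1} and argues directly. Fixing $z=(z_{k})\in Y$ for which $Az$ exists, I would start from the finite section
\begin{eqnarray*}
\sum_{k=0}^{m}m_{nk}z_{k}=\sum_{k=0}^{m}\left(n^{-1}a_{nk}-(n-1)^{-1}a_{n-1,k}\right)z_{k}=n^{-1}(Az)_{n}-(n-1)^{-1}(Az)_{n-1}
\end{eqnarray*}
and, letting $m\rightarrow\infty$, read off $(Mz)_{n}=\{\Sigma(Az)\}_{n}$ for each $n$. Since $Az\in d(bv)=(\ell_{1})_{\Sigma}$ is by definition the statement $\Sigma(Az)\in\ell_{1}$, this shows that $Az\in d(bv)$ whenever $z\in Y$ exactly when $Mz\in\ell_{1}$ whenever $z\in Y$, that is, $A\in (Y:d(bv))$ if and only if $M\in (Y:\ell_{1})$.

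The argument is essentially bookkeeping, so I do not expect a genuine obstacle. The only point demanding care is the boundary behaviour of the $\Sigma$-transform (\ref{transform2}), whose first nonzero coordinate is $x_{2}/2$ rather than a difference: I would verify that the initial row of $\Sigma A$ reproduces the intended first coordinate of $(m_{nk})$ and that the telescoping identity above is read correctly at that edge, so that no term is dropped in passing from the finite section to the limit. Apart from this edge-indexing check, the equivalence is immediate.
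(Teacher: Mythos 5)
Your proof is correct: the entrywise identity $(\Sigma A)_{nk}=n^{-1}a_{nk}-(n-1)^{-1}a_{n-1,k}=m_{nk}$ holds, and either of your routes (the triangle-domain lemma $A\in(Y:X_{U})\Leftrightarrow UA\in(Y:X)$ with $U=\Sigma$, or the direct verification that $(Mz)_{n}=\{\Sigma(Az)\}_{n}$ so that $Az\in d(bv)=(\ell_{1})_{\Sigma}$ exactly when $Mz\in\ell_{1}$) establishes the equivalence. The paper actually states this theorem without proof, but its proof of the companion result for $\int bv$ is precisely your second, direct argument, so your approach coincides with the paper's; you are also right that the ``$F$'' in the statement is a typo for $M$.
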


\begin{lem}\label{lemmtr1}
\begin{itemize}
\item[(i)] $A\in (\ell_{1}:bs)$ if and only if
\begin{eqnarray} \label{deq4}
\sup_{k,m \in \mathbb{N}}\left|\sum_{n=0}^{m}a_{nk}\right| < \infty.
\end{eqnarray}

\item[(ii)] $A\in (\ell_{1}:cs)$ if and only if (\ref{deq4}) holds, and
\begin{eqnarray}\label{deq5}
\sum_{n}a_{nk} ~ \textrm{ convergent for each }~k\in\mathbb{N}.
\end{eqnarray}

\item[(iii)] $A\in (\ell_{1}:c_{0}s)$ if and only if (\ref{deq4}) holds, and
\begin{eqnarray}\label{deq6}
\sum_{n}a_{nk}=0 ~ \textrm{ for each }~ k\in\mathbb{N}.
\end{eqnarray}
\end{itemize}
\end{lem}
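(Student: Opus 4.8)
The plan is to realize each of the three target spaces as a matrix domain of a more elementary space under the summation matrix, and then to invoke the reduction Lemma \ref{lemma1} together with the already-available characterizations of Lemma \ref{duallem3}. Let $S=(s_{nk})$ be the summation (triangle) matrix, $s_{nk}=1$ for $0\leq k\leq n$ and $s_{nk}=0$ otherwise, so that its transform is $(Sx)_n=\sum_{k=0}^n x_k$. By the very definitions of $bs$, $cs$ and $c_0s$ one has
\begin{eqnarray*}
bs=(\ell_{\infty})_{S},\qquad cs=c_{S},\qquad c_0s=(c_{0})_{S}.
\end{eqnarray*}
First I would record this observation and note that $S$ is a triangle, so the associativity $S(Ax)=(SA)x$ is automatic (each row of $S$ is finitely supported) and forming the product $SA$ raises no convergence question.

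Next I would apply Lemma \ref{lemma1} with $X=\ell_{1}$ and $U=S$. For part (i), taking $Y=\ell_{\infty}$ gives $A\in(\ell_{1}:bs)=(\ell_{1}:(\ell_{\infty})_{S})$ if and only if $SA\in(\ell_{1}:\ell_{\infty})$. Computing the entries of the product, $(SA)_{mk}=\sum_{n=0}^{m}a_{nk}$, and feeding this into Lemma \ref{duallem3}(i), the condition $\sup_{k,m}|(SA)_{mk}|<\infty$ is exactly (\ref{deq4}). For part (ii), taking $Y=c$ gives $A\in(\ell_{1}:cs)$ if and only if $SA\in(\ell_{1}:c)$; Lemma \ref{duallem3}(ii) then asks for (\ref{deq4}) together with the existence of $\lim_{m}(SA)_{mk}=\lim_{m}\sum_{n=0}^{m}a_{nk}$ for each $k$, i.e. the convergence of $\sum_{n}a_{nk}$, which is (\ref{deq5}).

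Part (iii) is where the only genuine gap appears: taking $Y=c_{0}$ gives $A\in(\ell_{1}:c_0s)$ if and only if $SA\in(\ell_{1}:c_{0})$, but the class $(\ell_{1}:c_{0})$ is not among the three cases listed in Lemma \ref{duallem3}. The main (and essentially only) obstacle is therefore to supply this characterization; however it is the standard refinement of Lemma \ref{duallem3}(ii), namely $A\in(\ell_{1}:c_{0})$ if and only if $\sup_{k,n}|a_{nk}|<\infty$ and $\lim_{n}a_{nk}=0$ for each $k$ — one simply repeats the proof of the $c$ case while forcing the coordinatewise limits $\alpha_k$ in (\ref{deq2}) to vanish. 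Applying it to $SA$ yields (\ref{deq4}) together with $\lim_{m}\sum_{n=0}^{m}a_{nk}=0$, that is $\sum_{n}a_{nk}=0$ for each $k$, which is precisely (\ref{deq6}). Collecting the three equivalences completes the proof of Lemma \ref{lemmtr1}; the remaining steps are the routine index bookkeeping in the entries of $SA$, which I would not spell out in detail.
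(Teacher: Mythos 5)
Your argument is correct, and it is worth noting that the paper itself offers no proof of this lemma at all: it is stated as a known result (these are entries from the standard Stieglitz--Tietz-type tables), so there is no ``paper proof'' to compare against step by step. Your derivation is nevertheless exactly in the spirit of the paper's own toolkit: the identifications $bs=(\ell_{\infty})_{S}$, $cs=c_{S}$, $c_{0}s=(c_{0})_{S}$ for the summation triangle $S$, combined with Lemma \ref{lemma1} ($A\in(X:Y_{U})\Leftrightarrow UA\in(X:Y)$) and the computation $(SA)_{mk}=\sum_{n=0}^{m}a_{nk}$, reduce (i) and (ii) immediately to Lemma \ref{duallem3}(i) and (ii). You correctly isolate the one missing ingredient, the characterization of $(\ell_{1}:c_{0})$, and the statement you supply --- $\sup_{n,k}|a_{nk}|<\infty$ together with $\lim_{n}a_{nk}=0$ for each $k$ --- is the standard refinement and is easily proved by the argument you sketch (necessity by testing on the unit vectors $e^{k}$ and a boundedness estimate; sufficiency by splitting the sum $\sum_{k}|a_{nk}||x_{k}|$ at a tail index using $\sup_{n,k}|a_{nk}|<\infty$). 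One small point of hygiene: in passing from $SA\in(\ell_{1}:Y)$ back to $A\in(\ell_{1}:Y_{S})$ one should check that $Ax$ itself exists for $x\in\ell_{1}$; this follows by differencing consecutive rows of $SA$, and your remark that the rows of $S$ are finitely supported already contains the needed observation. So the proposal is complete modulo routine verification.
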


\begin{lem}\label{lemmtr2}
\begin{itemize}

\item[(i)] $A\in (\ell_{\infty}:\ell_{1})=(c:\ell_{1})=(c_{0}:\ell_{1})$ if and only if
\begin{eqnarray} \label{deq0}
\sup_{N,K\in \mathcal{F}}\left|\sum_{n\in N}\sum_{k\in K}(a_{nk}-a_{n,k+1})\right|<\infty
\end{eqnarray}

\item[(ii)] $A\in (bs:\ell_{1})$ if and only if
\begin{eqnarray} \label{deq7}
\lim_{k}a_{nk}=0 ~ \textrm{ for each }~ n\in\mathbb{N}.
\end{eqnarray}
\begin{eqnarray} \label{deq8}
\sup_{N,K\in \mathcal{F}}\left|\sum_{n\in N}\sum_{k\in K}(a_{nk}-a_{n,k+1})\right|<\infty
\end{eqnarray}

\item[(iii)] $A\in (cs:\ell_{1})$ if and only if
\begin{eqnarray} \label{deq9}
\sup_{N,K\in \mathcal{F}}\left|\sum_{n\in N}\sum_{k\in K}(a_{nk}-a_{n,k-1})\right|<\infty
\end{eqnarray}

\item[(iv)] $A\in (c_{0}s:\ell_{1})$ if and only if (\ref{deq8}) holds.
\end{itemize}
\end{lem}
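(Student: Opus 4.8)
The plan is to treat part (i) as the base case and to derive (ii)--(iv) from it by realizing $bs$, $cs$ and $c_{0}s$ as matrix domains of the summation triangle $S=(s_{nk})$, where $s_{nk}=1$ for $k\leq n$ and $s_{nk}=0$ otherwise, so that $bs=(\ell_{\infty})_{S}$, $cs=c_{S}$ and $c_{0}s=(c_{0})_{S}$. Its inverse is the backward difference $S^{-1}$ with $(S^{-1})_{kk}=1$, $(S^{-1})_{k,k-1}=-1$ and zero otherwise. Alongside Lemma \ref{lemma1} (triangle on the \emph{output} side) one has the companion principle for a triangle $U$ on the \emph{input} side: $A\in(X_{U}:\ell_{1})$ if and only if $AU^{-1}\in(X:\ell_{1})$, provided each row $\{a_{nk}\}_{k}$ lies in $(X_{U})^{\beta}$ so that the reordering of the double series is legitimate. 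Computing $AS^{-1}$ entrywise gives $(AS^{-1})_{nk}=a_{nk}-a_{n,k+1}$, which is precisely the difference occurring in (\ref{deq8}); this is what ties the four parts together.

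For part (i) I would first invoke the classical equivalence $A\in(\ell_{\infty}:\ell_{1})\Leftrightarrow A\in(c:\ell_{1})\Leftrightarrow A\in(c_{0}:\ell_{1})$, the only substantial implication being that membership in $(c_{0}:\ell_{1})$ already forces membership in $(\ell_{\infty}:\ell_{1})$, a Schur-type effect caused by the target space being $\ell_{1}$. The standard norm criterion for this common class is $\sup_{K\in\mathcal{F}}\sum_{n}|\sum_{k\in K}a_{nk}|<\infty$. To recast it in the symmetric double-subset form of (\ref{deq0}) I would apply the elementary scalar estimate $\tfrac{1}{4}\sum_{n}|b_{n}|\leq\sup_{N\in\mathcal{F}}|\sum_{n\in N}b_{n}|\leq\sum_{n}|b_{n}|$, valid for every $b\in\ell_{1}$, to $b_{n}=\sum_{k\in K}a_{nk}$ and then take the supremum over $K\in\mathcal{F}$; this shows the norm criterion and the finite double-subset bound are equivalent.

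With (i) in hand, parts (ii)--(iv) follow by feeding the transformed matrix $a_{nk}-a_{n,k+1}$ into it. For $c_{0}s=(c_{0})_{S}$ the domain reduction is clean and yields (iv) at once. For $bs=(\ell_{\infty})_{S}$ one gets the same double-subset bound (\ref{deq8}), but membership of each row in $(bs)^{\beta}$ is needed for the reduction to be valid, and this is exactly the extra boundary requirement $\lim_{k}a_{nk}=0$ recorded in (\ref{deq7}); hence (ii). For $cs=c_{S}$ one must additionally carry the limit functional on $c$ (so that $(c:\ell_{1})$, unlike $(c_{0}:\ell_{1})$, imposes a condition on the row sums), and the summation by parts is organised ``to the left'', which is what produces the backward-difference form $a_{nk}-a_{n,k-1}$ in (\ref{deq9}).

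The main obstacle I anticipate is the bookkeeping of the Abel (summation-by-parts) transformation: one must verify that the interchange of the $k$- and $n$-summations is justified and that the boundary term vanishes, and both hinge on the correct $\beta$-dual membership of the rows of $A$ --- this is where condition (\ref{deq7}) enters for $bs$ and where the $c$-versus-$c_{0}$ distinction must be handled for $cs$. The Schur-type coincidence of the three classes in (i) is the other delicate point, since it is the engine that lets a single finite-subset condition characterise maps out of the ``large'' space $\ell_{\infty}$ as well as out of $c_{0}$.
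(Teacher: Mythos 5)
The paper states this lemma without proof---it is a catalogue of known characterizations, essentially entries of the Stieglitz--Tietz tables---so your proposal has to be judged on its own terms. Your architecture is sound and is the standard modern route: take (i) as the base case and obtain (ii)--(iv) by writing $bs=(\ell_{\infty})_{S}$, $cs=c_{S}$, $c_{0}s=(c_{0})_{S}$ for the summation triangle $S$ and passing to $AS^{-1}$, whose entries are indeed $a_{nk}-a_{n,k+1}$; and identifying the boundary-term requirement $\lim_{k}a_{nk}=0$ with membership of the rows of $A$ in $(bs)^{\beta}$ is essentially right (note $(bs)^{\beta}=bv\cap c_{0}$, but the bounded-variation half already follows from (\ref{deq8}) applied with $N$ a singleton, so the bookkeeping closes).

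The genuine gap is in part (i). The argument you describe produces the classical condition $\sup_{N,K\in\mathcal{F}}|\sum_{n\in N}\sum_{k\in K}a_{nk}|<\infty$ on the entries themselves (the $\tfrac{1}{4}$-inequality applied to $b_{n}=\sum_{k\in K}a_{nk}$ yields the double-subset form \emph{without} differences), whereas (\ref{deq0}) as printed involves $a_{nk}-a_{n,k+1}$. These are not equivalent: a matrix with constant nonzero rows $a_{nk}=c_{n}$ satisfies (\ref{deq0}) vacuously, yet $\sum_{k}a_{nk}x_{k}$ does not even converge for $x=(1,1,1,\dots)\in\ell_{\infty}$, so such an $A$ is not in $(\ell_{\infty}:\ell_{1})$. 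Either (\ref{deq0}) is a misprint for the undifferenced condition---which is what your proof would establish---or your ``recasting'' step silently proves a different statement from the one asserted; you must say which, rather than pass over the discrepancy. A smaller soft spot is (iii): ``organising the summation by parts to the left'' should be made precise. The clean device is to pass to the tails $R_{k}=\sum_{j\geq k}x_{j}$, which set up a bijection between $cs$ and $c_{0}$ and turn $\sum_{k}a_{nk}x_{k}$ into $\sum_{k}(a_{nk}-a_{n,k-1})R_{k}$ with the convention $a_{n0}=0$; this reduces (iii) directly to (i) and shows that, contrary to your suggestion, the limit functional on $c$ contributes no extra row-sum condition and no analogue of (\ref{deq7}) is needed.
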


Now, we can give the following results:
\begin{cor}
The following statements hold:
\begin{itemize}
 \item[(i)] $A=(a_{nk})\in (\int bv:\ell_{\infty})$ if and only if $\{a_{nk}\}_{k\in \mathbb{N}} \in \{\int bv\}^{\beta}$ for all $n \in \mathbb{N}$ and (\ref{deq1}) holds with $\overline{a}_{nk}$ instead of ${a}_{nk}$.
    \item[(ii)] $A=(a_{nk})\in (\int bv:c)$ if and only if $\{a_{nk}\}_{k\in \mathbb{N}} \in \{\int bv\}^{\beta}$ for all $n \in \mathbb{N}$ and (\ref{deq1}) and  (\ref{deq2}) hold with $\overline{a}_{nk}$ instead of ${a}_{nk}$.
  \item[(iii)] $A\in (\int bv:c_{0})$ if and only if $\{a_{nk}\}_{k\in \mathbb{N}}\in \{\int bv\}^\beta$ for all $n\in \mathbb{N}$
  and (\ref{deq1}) and  (\ref{deq2}) hold with $\alpha_{k}=0$ as $\overline{a}_{nk}$ instead of $a_{nk}$.
      \item[(iv)] $A=(a_{nk})\in (\int bv:bs)$ if and only if $\{a_{nk}\}_{k\in \mathbb{N}} \in \{\int bv\}^{\beta}$ for all $n \in \mathbb{N}$ and (\ref{deq4}) holds with $\overline{a}_{nk}$ instead of ${a}_{nk}$.
  \item[(v)] $A=(a_{nk})\in (\int bv:cs)$ if and only if $\{a_{nk}\}_{k\in \mathbb{N}} \in \{\int bv\}^{\beta}$ for all $n \in \mathbb{N}$ and (\ref{deq4}), (\ref{deq5}) hold with $\overline{a}_{nk}$ instead of ${a}_{nk}$.
  \item[(vi)] $A=(a_{nk})\in (\int bv:c_{0}s)$ if and only if $\{a_{nk}\}_{k\in \mathbb{N}} \in \{\int bv\}^{\beta}$ for all $n \in \mathbb{N}$ and (\ref{deq4}), (\ref{deq6}) hold with $\overline{a}_{nk}$ instead of ${a}_{nk}$.

\end{itemize}
\end{cor}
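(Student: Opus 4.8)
The plan is to read all six parts as immediate specializations of the transfer theorem established just above, built on the relation (\ref{mtrtrf1}): it asserts that $A\in(\int bv:Y)$ holds if and only if $\{a_{nk}\}_{k\in\mathbb{N}}\in\{\int bv\}^{\beta}$ for every $n\in\mathbb{N}$ and the associated matrix $F=(f_{nk})$ with $f_{nk}=\overline{a}_{nk}$ satisfies $F\in(\ell_{1}:Y)$. Thus the column condition $\{a_{nk}\}_{k}\in\{\int bv\}^{\beta}$ is common to all six statements, and what remains in each case is to insert the known characterization of $(\ell_{1}:Y)$ for the particular $Y$, written for the entries $f_{nk}=\overline{a}_{nk}$ rather than $a_{nk}$. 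Since the characterizations of $(\ell_{1}:Y)$ in the preceding lemmas are stated purely in terms of the matrix entries, substituting $\overline{a}_{nk}$ for $a_{nk}$ throughout reproduces exactly the phrasing ``with $\overline{a}_{nk}$ instead of $a_{nk}$'' appearing in the corollary.

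First I would dispatch parts (i)--(iii), where $Y\in\{\ell_{\infty},c,c_{0}\}$, using Lemma \ref{duallem3}. For (i) the characterization of $(\ell_{1}:\ell_{\infty})$ is condition (\ref{deq1}); applying it to $F$ gives (\ref{deq1}) with $\overline{a}_{nk}$ in place of $a_{nk}$. For (ii) the characterization of $(\ell_{1}:c)$ is (\ref{deq1}) together with (\ref{deq2}), and the same substitution yields the stated pair. For (iii), with $Y=c_{0}$, I would invoke the standard refinement that $F\in(\ell_{1}:c_{0})$ if and only if (\ref{deq1}) holds and (\ref{deq2}) holds with every limit $\alpha_{k}$ equal to $0$; this is the $c$ characterization of Lemma \ref{duallem3}(ii) subject to the extra requirement that the column limits vanish, so substituting $\overline{a}_{nk}$ gives precisely part (iii).

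Next I would treat the ``summable'' targets (iv)--(vi), where $Y\in\{bs,cs,c_{0}s\}$, using Lemma \ref{lemmtr1}. For (iv) the characterization of $(\ell_{1}:bs)$ is (\ref{deq4}); for (v) the characterization of $(\ell_{1}:cs)$ is (\ref{deq4}) with (\ref{deq5}); and for (vi) the characterization of $(\ell_{1}:c_{0}s)$ is (\ref{deq4}) with (\ref{deq6}). In each case the same mechanical replacement of $a_{nk}$ by $\overline{a}_{nk}$ in the cited conditions, carried out alongside the common column requirement $\{a_{nk}\}_{k}\in\{\int bv\}^{\beta}$, reproduces the claimed statement.

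The argument therefore carries no genuine obstacle beyond bookkeeping; the one point deserving a word of care is that the entries $\overline{a}_{nk}=k^{-1}\sum_{j=k}^{\infty}a_{nj}$ be well defined, i.e. that the relevant tail series converge so that $F$ exists. This is not an extra hypothesis: by Theorem \ref{betathm} we have $\{\int bv\}^{\beta}=d(bs)$, and membership in this dual is exactly the convergence condition that makes $F$ well defined. Hence the transfer theorem applies verbatim in all six cases, and the corollary follows. The only mildly non-routine step is (iii), since the $(\ell_{1}:c_{0})$ characterization is not isolated as a separate lemma and must be supplied as the $c$-case of Lemma \ref{duallem3}(ii) specialized to vanishing column limits.
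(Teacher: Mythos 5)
Your proposal is correct and follows exactly the route the paper intends: the corollary is stated immediately after the transfer theorem based on relation (\ref{mtrtrf1}) and the lemmas characterizing $(\ell_{1}:Y)$, so each part is the mechanical substitution of $\overline{a}_{nk}$ into Lemma \ref{duallem3} or Lemma \ref{lemmtr1} that you describe. Your added remarks on the $(\ell_{1}:c_{0})$ case and on the well-definedness of $\overline{a}_{nk}$ via $[\int bv]^{\beta}=d(bs)$ are sound and fill in details the paper leaves implicit.
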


\begin{cor}
The following statements hold:
\begin{itemize}
 \item[(i)] $A=(a_{nk})\in (d(bv):\ell_{\infty})$ if and only if $\{a_{nk}\}_{k\in \mathbb{N}} \in \{d(bv)\}^{\beta}$ for all $n \in \mathbb{N}$ and (\ref{deq1}) holds with $\widetilde{a}_{nk}$ instead of ${a}_{nk}$.
    \item[(ii)] $A=(a_{nk})\in (d(bv):c)$ if and only if $\{a_{nk}\}_{k\in \mathbb{N}} \in \{d(bv)\}^{\beta}$ for all $n \in \mathbb{N}$ and (\ref{deq1}) and  (\ref{deq2}) hold with $\widetilde{a}_{nk}$ instead of ${a}_{nk}$.
  \item[(iii)] $A\in (d(bv):c_{0})$ if and only if $\{a_{nk}\}_{k\in \mathbb{N}}\in \{d(bv)\}^\beta$ for all $n\in \mathbb{N}$
  and (\ref{deq1}) and  (\ref{deq2}) hold with $\alpha_{k}=0$ as $\widetilde{a}_{nk}$ instead of $a_{nk}$.
      \item[(iv)] $A=(a_{nk})\in (d(bv):bs)$ if and only if $\{a_{nk}\}_{k\in \mathbb{N}} \in \{d(bv)\}^{\beta}$ for all $n \in \mathbb{N}$ and (\ref{deq4}) holds with $\widetilde{a}_{nk}$ instead of ${a}_{nk}$.
  \item[(v)] $A=(a_{nk})\in (d(bv):cs)$ if and only if $\{a_{nk}\}_{k\in \mathbb{N}} \in \{d(bv)\}^{\beta}$ for all $n \in \mathbb{N}$ and (\ref{deq4}), (\ref{deq5}) hold with $\widetilde{a}_{nk}$ instead of ${a}_{nk}$.
  \item[(vi)] $A=(a_{nk})\in (d(bv):c_{0}s)$ if and only if $\{a_{nk}\}_{k\in \mathbb{N}} \in \{\int bv\}^{\beta}$ for all $n \in \mathbb{N}$ and (\ref{deq4}), (\ref{deq6}) hold with $\widetilde{a}_{nk}$ instead of ${a}_{nk}$.
\end{itemize}
\end{cor}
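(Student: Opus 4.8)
The plan is to reduce every assertion to the corresponding characterization of the class $(\ell_{1}:Y)$ by means of the theorem established through the relation (\ref{mtrtrf2}). That theorem asserts that, whenever $g_{nk}=\widetilde{a}_{nk}$ for all $k,n\in\mathbb{N}$, one has $A\in(d(bv):Y)$ if and only if $\{a_{nk}\}_{k\in\mathbb{N}}\in\{d(bv)\}^{\beta}$ for every $n\in\mathbb{N}$ and $G=(g_{nk})\in(\ell_{1}:Y)$. Thus in each of the six cases the requirement $\{a_{nk}\}_{k\in\mathbb{N}}\in\{d(bv)\}^{\beta}$ is common, and it remains only to rewrite the condition $G\in(\ell_{1}:Y)$ in terms of the original matrix $A$.

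Since the entries of $G$ are $g_{nk}=\widetilde{a}_{nk}=k\sum_{j=k}^{\infty}a_{nj}$, any condition on $(\ell_{1}:Y)$ phrased through the entries of a matrix becomes, for $G$, the same condition with $\widetilde{a}_{nk}$ in place of the generic entry. Concretely: for $Y=\ell_{\infty}$ I would invoke Lemma \ref{duallem3} (i), giving condition (\ref{deq1}) with $\widetilde{a}_{nk}$; for $Y=c$ Lemma \ref{duallem3} (ii), giving (\ref{deq1}) and (\ref{deq2}) with $\widetilde{a}_{nk}$; for $Y=c_{0}$ the same as $c$ but specialised to $\alpha_{k}=0$; for $Y=bs$ Lemma \ref{lemmtr1} (i), giving (\ref{deq4}); for $Y=cs$ Lemma \ref{lemmtr1} (ii), giving (\ref{deq4}) and (\ref{deq5}); and for $Y=c_{0}s$ Lemma \ref{lemmtr1} (iii), giving (\ref{deq4}) and (\ref{deq6})---in every instance with $\widetilde{a}_{nk}$ substituted for $a_{nk}$. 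Assembling these with the common $\beta$-dual requirement yields precisely statements (i)--(vi).

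This argument is essentially bookkeeping once the reduction theorem is in hand, so I do not expect a genuine obstacle. The only points demanding care are: confirming that the characterization of $(\ell_{1}:c_{0})$ used in (iii) is indeed that of $(\ell_{1}:c)$ with the limits $\alpha_{k}$ all equal to $0$ (this follows since $c_{0}$ is the subspace of $c$ of sequences with limit zero, so the coordinatewise limits $\lim_{n}\widetilde{a}_{nk}$ must then vanish); and transcribing each numbered condition with the correct replacement so that no stray factor of $k$ coming from $\widetilde{a}_{nk}$ is lost. These are routine verifications, and the structure of the earlier $\int bv$ corollary---which is proved by the same scheme via the relation (\ref{mtrtrf1}) and $\overline{a}_{nk}$---serves as a direct template.
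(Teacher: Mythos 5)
Your proposal is correct and follows exactly the route the paper intends: the corollary is obtained by combining the reduction theorem based on the relation (\ref{mtrtrf2}) (which trades $A\in(d(bv):Y)$ for $\{a_{nk}\}_{k\in\mathbb{N}}\in\{d(bv)\}^{\beta}$ together with $G\in(\ell_{1}:Y)$) with the characterizations of $(\ell_{1}:Y)$ in Lemma \ref{duallem3} and Lemma \ref{lemmtr1}, substituting $\widetilde{a}_{nk}$ for $a_{nk}$ throughout. The paper gives no separate written proof precisely because the argument is this bookkeeping, so nothing further is needed.
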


\begin{cor}
We have:
\begin{itemize}
  \item[(i)] $A=(a_{nk})\in (\ell_{\infty}:\int bv)=(c:\int bv)=(c_{0}:\int bv)$ if and only if (\ref{deq0}) hold with $\widehat{a}_{nk}$ instead of ${a}_{nk}$.
  \item[(ii)] $A=(a_{nk})\in (bs:\int bv)$ if and only if (\ref{deq7}) and (\ref{deq8}) hold with $\widehat{a}_{nk}$ instead of ${a}_{nk}$.
  \item[(iii)] $A=(a_{nk})\in (cs:\int bv)$ if and only if (\ref{deq9}) holds with $\widehat{a}_{nk}$ instead of ${a}_{nk}$.
  \item[(iv)] $A=(a_{nk})\in (c_{0}s:\int bv)$ if and only if (\ref{deq8}) holds with $\widehat{a}_{nk}$ instead of ${a}_{nk}$.

\end{itemize}
\end{cor}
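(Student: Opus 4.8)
The plan is to reduce each of the four statements to the corresponding part of Lemma \ref{lemmtr2} by invoking the theorem associated with relation (\ref{mtrtrf3}). That theorem asserts, for an arbitrary sequence space $Y$, that $A=(a_{nk})\in (Y:\int bv)$ if and only if $H=(h_{nk})\in (Y:\ell_{1})$, where $h_{nk}=\widehat{a}_{nk}=n\,a_{nk}-(n-1)\,a_{n-1,k}$. Thus all four characterizations become characterizations of membership of $H$ in $(Y:\ell_{1})$ for the choices $Y\in\{\ell_{\infty},c,c_{0},bs,cs,c_{0}s\}$, and these are precisely what Lemma \ref{lemmtr2} supplies. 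The equivalence itself rests on the telescoping identity $(Hz)_{n}=\{\Gamma(Az)\}_{n}$ recorded in the proof of that theorem, together with the fact that $Az\in\int bv$ is equivalent to $\Gamma(Az)\in\ell_{1}$.

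First I would treat (i). Taking $Y=\ell_{\infty}$ and applying the theorem gives that $A\in(\ell_{\infty}:\int bv)$ holds exactly when $H\in(\ell_{\infty}:\ell_{1})$; part (i) of Lemma \ref{lemmtr2} then rewrites the latter membership as condition (\ref{deq0}) imposed on the entries of $H$, that is, (\ref{deq0}) with $\widehat{a}_{nk}$ in place of $a_{nk}$. Since Lemma \ref{lemmtr2}(i) already records the equality $(\ell_{\infty}:\ell_{1})=(c:\ell_{1})=(c_{0}:\ell_{1})$, the same criterion governs $Y=c$ and $Y=c_{0}$, which yields the stated triple equality $(\ell_{\infty}:\int bv)=(c:\int bv)=(c_{0}:\int bv)$.

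The remaining three items follow the identical template. For (ii) I would set $Y=bs$, pass to $H\in(bs:\ell_{1})$ via the theorem, and invoke Lemma \ref{lemmtr2}(ii) to obtain conditions (\ref{deq7}) and (\ref{deq8}) on $\widehat{a}_{nk}$. For (iii) I would take $Y=cs$ and use Lemma \ref{lemmtr2}(iii), giving (\ref{deq9}); for (iv) I would take $Y=c_{0}s$ and use Lemma \ref{lemmtr2}(iv), giving (\ref{deq8}). In each case the substitution $a_{nk}\mapsto\widehat{a}_{nk}$ enters only through the definition of $H$, so the conditions transcribe verbatim onto the entries of $H$.

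There is no genuine analytic obstacle: the corollary is a direct composition of the transfer theorem built on (\ref{mtrtrf3}) with Lemma \ref{lemmtr2}. The one point requiring care is purely notational, namely ensuring that the conditions in Lemma \ref{lemmtr2}, stated for a generic matrix, are applied to $H$ and hence read with $\widehat{a}_{nk}$ rather than $a_{nk}$, and that the limit, boundedness, and double-sum conditions are transcribed to the correct indices. Once the identity $(Hz)_{n}=\{\Gamma(Az)\}_{n}$ is in hand no further computation is needed.
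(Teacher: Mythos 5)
Your proposal is correct and is exactly the derivation the paper intends: the corollary is stated without proof as an immediate consequence of the theorem built on relation (\ref{mtrtrf3}) (which reduces $A\in(Y:\int bv)$ to $H\in(Y:\ell_{1})$ via the identity $(Hz)_{n}=\{\Gamma(Az)\}_{n}$) combined with Lemma \ref{lemmtr2}. Your handling of the notational substitution $a_{nk}\mapsto\widehat{a}_{nk}$ and of the triple equality in part (i) matches the paper's implicit argument.
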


\begin{cor}
We have:
\begin{itemize}
  \item[(i)] $A=(a_{nk})\in (\ell_{\infty}:d(bv))=(c:d(bv))=(c_{0}:d(bv))$ if and only if (\ref{deq0}) hold with $\overrightarrow{a}_{nk}$ instead of ${a}_{nk}$.
  \item[(ii)] $A=(a_{nk})\in (bs:d(bv))$ if and only if (\ref{deq7}) and (\ref{deq8})  hold with $\overrightarrow{a}_{nk}$ instead of ${a}_{nk}$.
  \item[(iii)] $A=(a_{nk})\in (cs:d(bv))$ if and only if (\ref{deq9}) holds with $\overrightarrow{a}_{nk}$ instead of ${a}_{nk}$.
    \item[(iv)] $A=(a_{nk})\in (c_{0}s:d(bv))$ if and only if (\ref{deq8}) holds with $\widehat{a}_{nk}$ instead of ${a}_{nk}$.
\end{itemize}
\end{cor}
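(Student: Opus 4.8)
The plan is to reduce the entire corollary to the immediately preceding theorem (the one attached to relation~(\ref{mtrtrf4})) together with Lemma~\ref{lemmtr2}, so that each of the four items becomes a single substitution. First I would recall that $d(bv)=(\ell_{1})_{\Sigma}$ with $\Sigma$ a triangle, so that by Lemma~\ref{lemma1} a matrix $A$ lies in $(Y:d(bv))$ precisely when $\Sigma A\in(Y:\ell_{1})$. A short computation with (\ref{matr2}) confirms that the entries of $\Sigma A$ are exactly $(\Sigma A)_{nk}=n^{-1}a_{nk}-(n-1)^{-1}a_{n-1,k}=\overrightarrow{a}_{nk}$; that is, $\Sigma A=M$ with $m_{nk}=\overrightarrow{a}_{nk}$, which is the content of the theorem attached to (\ref{mtrtrf4}). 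Thus in every case the question of membership in $(Y:d(bv))$ is replaced by membership of the single transformed matrix $M=(\overrightarrow{a}_{nk})$ in $(Y:\ell_{1})$.

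Next I would run through the four choices of $Y$ and quote the matching clause of Lemma~\ref{lemmtr2}, applied to $M$ rather than to $A$. For (i) the three domains $\ell_{\infty}$, $c$, $c_{0}$ yield the same class $(\ell_{\infty}:\ell_{1})=(c:\ell_{1})=(c_{0}:\ell_{1})$ by Lemma~\ref{lemmtr2}(i), so the reduction produces one characterization, namely condition~(\ref{deq0}) read with $\overrightarrow{a}_{nk}$ in place of $a_{nk}$; because the reduction step does not distinguish between these three domains, the equalities $(\ell_{\infty}:d(bv))=(c:d(bv))=(c_{0}:d(bv))$ are inherited automatically. For (ii) I would apply Lemma~\ref{lemmtr2}(ii) to $M$, obtaining conditions~(\ref{deq7}) and~(\ref{deq8}) with $\overrightarrow{a}_{nk}$; for (iii), Lemma~\ref{lemmtr2}(iii) gives~(\ref{deq9}) with $\overrightarrow{a}_{nk}$; and for (iv), Lemma~\ref{lemmtr2}(iv) gives~(\ref{deq8}) with $\overrightarrow{a}_{nk}$.

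Since the reductive theorem is already established, there is essentially no analytic obstacle here: the only genuine content is the bookkeeping, namely verifying that the transformed matrix is indeed $(\overrightarrow{a}_{nk})$ and that each convergence condition of Lemma~\ref{lemmtr2} is being tested on that matrix. The one point demanding care is notational consistency: item~(iv) must be read with $\overrightarrow{a}_{nk}$ (the $\Sigma$-transform associated with $d(bv)$) and \emph{not} with $\widehat{a}_{nk}$, since $\widehat{a}_{nk}$ belongs to the parallel treatment of $\int bv$; I would align all four items to use $\overrightarrow{a}_{nk}$ uniformly. With that correction in place, each item follows at once by combining the theorem attached to (\ref{mtrtrf4}) with the corresponding clause of Lemma~\ref{lemmtr2}, which completes the proof.
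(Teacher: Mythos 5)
Your proposal is correct and is exactly the derivation the paper intends: the corollary is stated without a written proof as an immediate consequence of the theorem attached to (\ref{mtrtrf4}) (equivalently, Lemma \ref{lemma1} with $U=\Sigma$, so that $A\in(Y:d(bv))$ iff $\Sigma A=(\overrightarrow{a}_{nk})\in(Y:\ell_{1})$) combined with the four clauses of Lemma \ref{lemmtr2}. You are also right that the occurrence of $\widehat{a}_{nk}$ in item (iv) is a typographical slip for $\overrightarrow{a}_{nk}$.
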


\section{Conclusion}
Goes and Goes \cite{Goes} introduced the integrated and differentiated sequence spaces. Subramanian et.al. \cite{subraoguru} gave the integrated rate space $\int \ell_{\pi}$ and studied some properties of this space. And  they also characterized the matrix classes $\left(\int\ell_{\pi}:Y\right)$, where $Y=\{\ell_{\infty}, c,c_{0}, \ell_{p}, bv, bv_{0}, bs, cs, \ell_{\rho}, \ell_{\pi}\}$. There are no studies on differentiated sequence spaces.\\

In this paper, we studied some properties of integrated and differentiated sequence spaces. We compute the alpha-, beta- and gamma-duals of these spaces. For $Y=\{\ell_{\infty}, c,c_{0}, bs, cs, c_{0}s\}$, we characterize matrix classes $(\int bv:Y), (d(bv):Y)$ and $(Y: \int bv), (Y: d(bv))$ in the last section.\\

We should note from now on that the investigation of the domain of some particular limitation matrices, namely
Cesàro means of order one, Euler means of order r, Riesz means, Nörlund means, the double band matrix $B(r,s)$,
the triple band matrix $B(r,st), $etc., in the spaces $\int bv$ and $d(bv)$ will lead us to new
results which are not comparable with the present results. If we can choose different sequence spaces for the space $Y$,
it can study new matrix characterizations of $(\int bv:Y), (d(bv):Y)$ and $(Y: \int bv), (Y: d(bv))$. Also the spaces $\int bv$ and $d(bv)$
can be defined by a index $p$ and paranormed sequence spaces as $p=(p_{k})$ is a sequence of strictly positive numbers.

\end{document}